\font\smallit=cmti10
\font\smalltt=cmtt10
\renewcommand\section{\@startsection {section}{1}{\z@}
{-30pt \@plus -1ex \@minus -.2ex}
{2.3ex \@plus.2ex}
{\normalfont\normalsize\bfseries\boldmath}}
\renewcommand\subsection{\@startsection{subsection}{2}{\z@}
{-3.25ex\@plus -1ex \@minus -.2ex}
{1.5ex \@plus .2ex}
{\normalfont\normalsize\bfseries\boldmath}}
\renewcommand{\@seccntformat}[1]{\csname the#1\endcsname. }
\newtheorem{theorem}{Theorem}[section]
\newtheorem{lemma}[theorem]{Lemma}
\newtheorem{corollary}[theorem]{Corollary}
\theoremstyle{definition}
\newtheorem{remark}[theorem]{Remark}
\newtheorem{fact}[theorem]{Fact}
\begin{document}

\begin{center}
\uppercase{\bf Locally nilpotent polynomials over $\mathbb{Z}$}
\vskip 20pt
{\bf Sayak Sengupta}\\
{\smallit Department of Mathematics and Statistics,
		Binghamton University - SUNY,
		Binghamton, New York,
		USA}\\
{\tt sengupta@math.binghamton.edu}\\ 
\vskip 10pt

\end{center}
\vskip 20pt
%\centerline{\smallit Received: , Revised: , Accepted: , Published: }  We will fill in the dates
\vskip 30pt

\centerline{\bf Abstract}
\noindent
For a polynomial $u=u(x)$ in $\mathbb{Z}[x]$ and $r\in\mathbb{Z}$, we consider the orbit of $u$ at $r$ denoted and defined by $\mathcal{O}_u(r):=\{u(r),u(u(r)),\ldots\}$. We ask two questions here: (i) what are the polynomials $u$ for which $0\in \mathcal{O}_u(r)$, and (ii) what are the polynomials for which $0\not\in \mathcal{O}_u(r)$ but, modulo every prime $p$, $0\in \mathcal{O}_u(r)$? In this paper, we give a complete classification of the polynomials for which (ii) holds for a given $r$. We also present some results for some special values of $r$ where (i) can be answered.

\pagestyle{myheadings}
\markright{\smalltt INTEGERS: 23 (2023)\hfill}
\thispagestyle{empty}
\baselineskip=12.875pt
\vskip 30pt

\section{Introduction}
In Example 1 of \cite{B13}, A. Borisov set up a polynomial map, called the \textit{additive trap}, $F_{at}:\mathbb{A}_\mathbb{Z}^2\to \mathbb{A}_\mathbb{Z}^2$, which maps $(x,y)\mapsto (x^2y,x^2y+xy^2)$. This polynomial map satisfies many interesting properties. In particular, $F_{at}^{(p)}(x,y)\equiv (0,0)\pmod p$ for every $(x,y)\in \mathbb{A}^2_{\mathbb{F}_p}$ and for all primes $p$, where $F_{at}^{(p)}$ is the $p$th iteration of $F_{at}$. To prove this, suppose that $p$ is a prime. Note that all points $(x,y)\in\mathbb{A}_{\mathbb{F}_p}^2$ with either $x=0$ or $y=0$ are taken to (0,0) by $F_{at}$.  Let $x\in \mathbb{F}_p^*$. Then for any $y\in\mathbb{F}_p^*$ we get $$\frac{x^2y+xy^2}{x^2y}=\frac{y}{x}+1.$$ So, after at most $p-1$ iterations, the second coordinate becomes 0 and thus, applying $F_{at}$ once more, we reach (0,0). Since $p$ is arbitrary, the proof follows. For more details see \cite{B13}. One can see from the discussion that the $p$th iteration of $F_{at}$ modulo $p$ is the zero map, which follows from the fact that the polynomial $u(x)=x+1$ has the following property: for every $n\in \mathbb{N}$, $u^{(n)}(x)=x+n$, so that, in particular, for every prime $p$, $u^{(p-1)}(1)=p\equiv 0\pmod p$. Throughout this paper, by $\mathbb{N}$ we will mean the set of all positive integers, and by $u^{(n)}$, we will mean $\underbrace{u\circ u\circ \cdots\circ u}_{n\text{ times}}$, the $n$th iteration of $u$. We can write our first definition now which is motivated by the behavior of the polynomial $u(x)=x+1$. Suppose $r\in\mathbb{Z}$ and $A$ is a finite subset of the set of all prime numbers. If, for every prime $p$ not contained in $A$, there exists an $m\in\mathbb{N}$ such that $u^{(m)}(r)\equiv 0\pmod p$, we will say that \textit{$u$ is weakly locally nilpotent at $r$ outside $A$}. The set of all weakly locally nilpotent polynomials at $r$ outside $A$ of degree $d$ will be denoted by $L_{r,A}^d$ and $L_{r,A}$ is the union of $L_{r,A}^d$'s, where the union is taken over all degrees $d\in\mathbb{N}$. When $A=\emptyset$, i.e., for all primes $p$, $m$ exists satisfying $u^{(m)}(r)\equiv 0\pmod p$, we say that $u$ is locally nilpotent at $r$. Thus $u(x)=x+1$ is locally nilpotent at $1$, i.e., $x+1\in L_{1,\emptyset}^1$. If, in particular, a polynomial $u$ is such that $u^{(n)}(r)=0,$ for some $n\in\mathbb{N}$, then we will say that $u$ is \textit{nilpotent at $r$} and the \textit{nilpotency index} is the least of such $n's$. We denote the set of all nilpotent polynomials at $r$ of degree $d$ and nilpotency index $i$ by $N_{r,i}^d$. Also, $N_r$ is the union of all such $N_{r,i}^d$, where the union is taken over $i,d\in\mathbb{N}$. Thus $u(x)=x-1$ is nilpotent at 1 of nilpotency index 1, i.e., $x-1\in N_{1,1}^1$. One of the principal goals of this paper is to understand the polynomials that are locally nilpotent without being nilpotent at $r$, i.e., $0$ is not in the orbit of a polynomial $u$ at $r$ but modulo every prime $p$, some iteration of $u$ at $r$ hits $0$. This is an interesting question and it has been answered completely here. To classify these polynomials we have used Theorem 5 of \cite{TS13}. In particular, if we take $K=\mathbb{Q},g=1,A_1=\{0\},T_1=\{r\},\varphi_1=u\text{ 
 polynomial}$ in this theorem, we get Fact 1.1 in our paper which says:
\begin{fact}
    If $u$ is a polynomial that is of degree at least 2 and it is weakly locally nilpotent at $r$ outside some finite set of primes $A$, then it must be nilpotent at $r$. Equivalently, if there is a polynomial $u$ such that it is weakly locally nilpotent at $r$ outside $A$ but not nilpotent at $r$, then it must be a linear polynomial.
\end{fact}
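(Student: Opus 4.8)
The plan is to obtain Fact 1.1 by specializing Theorem 5 of \cite{TS13}, which governs the size of the set of primes modulo which the orbit of a point under a polynomial map lands on a prescribed target. The first step is to translate the hypothesis into the language of that theorem. By definition, $u$ being weakly locally nilpotent at $r$ outside the finite set $A$ means that for every prime $p\notin A$ there is an iterate with $p\mid u^{(m)}(r)$; equivalently, for every such $p$ the reduction of the orbit $\mathcal{O}_u(r)$ modulo $p$ contains $0$. Hence the set $\Pi$ of primes $p$ for which $0$ lies in the mod-$p$ orbit of $r$ contains every prime outside $A$, so $\Pi$ is \emph{cofinite} in the set of all primes.

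The second step is the specialization itself. Taking $K=\mathbb{Q}$, $g=1$, the single map $\varphi_1=u$, starting data $T_1=\{r\}$, and target $A_1=\{0\}$, the set controlled by \cite[Theorem 5]{TS13} is exactly $\Pi$. In this one-dimensional, single-map case the theorem yields a dichotomy: if $\deg u\ge 2$ and $0\notin\mathcal{O}_u(r)$ over $\mathbb{Z}$, then $\Pi$ is small --- of density zero, and in particular not cofinite. The degree hypothesis is what drives this: for $\deg u\ge 2$ the iterates $u^{(m)}(r)$ grow doubly exponentially, so the primes they can introduce are far too sparse to account for all but finitely many primes, and this is precisely the quantitative input that \cite{TS13} supplies.

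The argument then closes by contradiction. Assume $\deg u\ge 2$, that $u$ is weakly locally nilpotent at $r$ outside $A$, but that $u$ is not nilpotent at $r$, i.e.\ $0\notin\mathcal{O}_u(r)$. The translation step makes $\Pi$ cofinite, while the specialized theorem makes $\Pi$ of density zero; since a cofinite set of primes cannot have density zero, this is impossible. Therefore $0\in\mathcal{O}_u(r)$, i.e.\ $u$ is nilpotent at $r$, which is the first assertion. The equivalent second assertion is its contrapositive: a polynomial that is weakly locally nilpotent at $r$ outside $A$ but not nilpotent at $r$ cannot have degree $\ge 2$, hence is linear.

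The step I expect to be the main obstacle is verifying that the specialization genuinely meets the hypotheses of \cite[Theorem 5]{TS13}: one must check whatever non-degeneracy or non-special conditions the theorem imposes on the map $\varphi_1$ and on the configuration of target and orbit, and confirm that excluding the finite set $A$ does not affect the density conclusion. Once the dictionary between ``weak local nilpotency'' and ``cofinite hitting set'' is set up and these hypotheses are checked, the contradiction is immediate.
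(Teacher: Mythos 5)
Your proposal is correct and takes essentially the same route as the paper: the paper's entire proof of Fact 1.1 is precisely the specialization $K=\mathbb{Q}$, $g=1$, $\varphi_1=u$, $T_1=\{r\}$, $A_1=\{0\}$ of Theorem 5 of \cite{TS13}, combined with the observation that weak local nilpotency at $r$ outside $A$ forces the set $\Pi$ of primes whose mod-$p$ orbit hits $0$ to be cofinite. One caveat: Theorem 5 of \cite{TS13} concludes that the set of primes for which the reduced orbit \emph{avoids} the target has \emph{positive density}, not that the hitting set $\Pi$ has density zero as you assert; this misquotation is harmless here, since either version makes $\Pi$ non-cofinite, which is all your contradiction requires.
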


 It should be noted that Theorem 5 of \cite{TS13} was built upon the work of Silverman in \cite{S93}, and the works of Benedetto-Ghioca-Kurlberg-Tucker-Zannier in Lemma 4.1 of \cite{TT12}. It should also be noted that the question of when an orbit passes through a given point modulo a prime came naturally from the study of the Dynamical Mordell-Lang (DML) Conjecture, which is a major open question in arithmetic dynamics. The paper \cite{TS13} was developed to prove certain special cases of the DML Conjecture. Interested readers can also look at \cite{TT16}.
 
 From Fact 1.1 it is imperative to study the behavior of linear polynomials in order to understand locally nilpotent polynomials which are not nilpotent. This is where we use Theorem 1 of \cite{CS97} and derive a Lemma, which we call CRS Lemma (Lemma 3.2). This Lemma has been stated and proved in section 3.

 The paper contains four main results:
\begin{enumerate}
    \item Complete classification of all polynomials in $L_{r,\emptyset}$, when $r\in\{0,-1,1\}$. This can be found in Theorems 4.1, 4.4 and Corollary 4.2.
    
    \item Complete classification of all polynomials in $L_{1,A}^1$ for any given finite subset $A$ of the set of prime numbers. This can be found in Theorem 5.1. To establish this we have used Lemma 3.2.
    
    \item Complete classification of all polynomials in $S_r$ which can only be linear polynomials by Fact 1.1, where $S_r:=L_{r,\emptyset}\setminus N_r$. This can be found in Corollaries 4.3, 4.5, 5.4 and Theorem 5.3. Lemma 3.2 has also been used to establish this.
\end{enumerate}

 The main tools that we have used here are the following (see section 3 for details):
\begin{enumerate}
    \item Facts 1.1 and 3.1,

    \item Lemma 3.2, 
    
    \item The reduction of polynomials, a technique described after Remark 3.3.
\end{enumerate}

 This paper has 6 sections in total. Section 1 is the introduction. In sections 2 and 3, respectively, we formalize the definitions and introduce the main tools. Section 4 contains the main results listed in (1) above. Section 5 is dedicated to the classification of polynomials in $S_r$. The last section has some open questions and discussions that arose from the study of the polynomials in this paper. Interested reader can also look at the works of Shallit, Vasiga in \cite{VS04} and Odoni in \cite{O85}.

\section{Definitions, notation and terminology}
We will start by formally defining the polynomials mentioned in the introduction and fixing some basic terminology that we will use throughout this paper. Let $\mathcal{P}$ be the set of all positive primes in $\mathbb{Z}$. For a finite subset $A$ of $\mathcal{P}$ and for $a\in \mathbb{Z}$, we define
$$\mathcal{P}_A:=\mathcal{P}\setminus A\;\;\textup{and}\;\; P_A(a):=\{p\in \mathcal{P}_A~|~p \text{ divides }a\}\;\; \textup{and}\;\; P(a):=P_{\emptyset}(a).$$
So $P(a)$ is the set of all positive primes that divides $a$. For $u=u(x)\in\mathbb{Z}[x]$ of degree at least 1, we define the polynomials $u^{(1)}(x):=u(x)$ and $u^{(n+1)}(x):=u(u^{(n)}(x))$, $n\in\mathbb{N}$. Having fixed $r$ in $\mathbb{Z}$, $A$ a finite subset of $\mathcal{P}$, $d$ in $\mathbb{N}$ a degree and $i$ in $\mathbb{N}$ an index, we define the following:

\begin{enumerate}

    \item We will say that $u(x)$ is a \textit{weakly locally nilpotent polynomial} at $r$ outside $A$ if for each $p\in \mathcal{P}_A$, there exists $m\in\mathbb{N}$ (possibly depending on $p$) such that $u^{(m)}(r)\equiv 0\pmod p$. For each $p\in  \mathcal{P}_A$, we let $m_p$ be the least of all such $m'$s. We fix the following notation for weakly locally nilpotent polynomials at $r$ outside $A$:\\ 
    $L_{r,A}^d:=\{u~|~u\textup{, of degree } d,\textup{ is weakly locally nilpotent at }r \textup{ outside }A\},$\\
    $L_{r,A}:=\sqcup_{d=1}^\infty L_{r,A}^d~$.
    
    \item If $A=\emptyset$ in (1), then we will just drop the terms ``weakly" and ``outside $A$". 
    
    \item We will say that $u(x)$ is a \textit{nilpotent polynomial} at $r$ if $\text{there exists an}~ n\in\mathbb{N}$ such that $u^{(n)}(r)=0$. We will call the smallest of all such $n'$s the \textit{nilpotency index/index of nilpotency} of $u(x)$ at $r$. If $u^{(n)}(r)\neq 0$ for all $n\in\mathbb{N}$, we will say that $u$ is \textit{non-nilpotent} at $r$. We fix the following notations for nilpotent polynomials at $r$:\\ $N_{r,i}^d:=\{u~|~u\textup{ is nilpotent at }r \textup{ of nilpotency index }i\textup{ and degree }d\},$\\
    $N_{r,i}:=\sqcup_{d=1}^\infty N_{r,i}^d~,$\\
    $N_r:=\sqcup_{i=1}^\infty N_{r,i}~.$
    
    \item The rest of the notation are as follows:\\
    $S_r:=L_r\setminus N_r$.
    
    For integers $a,b,c~(c\neq 0)$, we will write $a\equiv_c b$ to mean $a\equiv b\pmod c$.
\end{enumerate}

\begin{remark}
It is clear that $N_r\subset L_{r,\emptyset}$. But it turns out that, for every given $r\in\mathbb{Z}$, $S_r$ is non-empty (see Corollaries 4.3, 4.5 and 5.4, and Theorem 5.3 below).

\end{remark}
 
\subsection{Some examples}
\begin{enumerate}
    \item[a.] Let $r\in\mathbb{Z}$. For each non-zero $q(x)\in\mathbb{Z}[x]$, $(x-r)q(x)\in N_{r,1}$.
    
    \item[b.] If $u(x)=-2x-4$, then $u(-1)=-2,~u(-2)=0$. So $u(x)\in N_{-1,2}^1$. If $r\in\mathbb{Z}\setminus\{-1\}$, $u_r(x):=-(r+1)x+(r+1)^2\in N_{r,2}^1$, and if $r\in\mathbb{Z}\setminus\{0\}$, $u_r(x):=-2x+4r\in N_{r,2}^1$. Also if $r\in\mathbb{Z}\setminus\{0\}$, $u(x)=x\pm 1\in N_{r,r}^1$, where we use the negative sign when $r$ is positive and positive sign otherwise.

    \item[c.] Let $u(x)=-2x^2+7x-3$. Then $u(1)=2,~u(2)=3$ and $u(3)=0$. So $u(x)\in N_{1,3}^2$. From this and Fact 3.1 (stated and proved below) it follows that $v(x):=2x^2+7x+3\in N_{-1,3}^2$.

    \item[d.] The polynomial $u(x)=-x^3+9x^2-25x+25\in N_{2,4}^3$.
    
    \item[e.] \textbf{This example shows the existence of non-nilpotent, locally nilpotent polynomials at 1.} Let $u(x)=x+1$. Then by induction it is easy to see that $u^{(n)}(1)=n+1$, for every $n\in\mathbb{N}$, and hence $u\notin N_{1}$. For each $p\in \mathcal{P}$, $u^{(p-1)}(1)=p\equiv_p 0$. Thus $u(x)\in S_1$. In Corollary 4.3 we will see that $S_1=\{x+1\}$.
    
    \item[f.] For every $a\in\mathbb{Z}\setminus\{0\}$, let $u_a=u_a(x):=x+a$. By induction, we get $u_a^{(n)}(0)=na$. So it is clear that $u_a\notin N_0$. For each prime $p$, $u_a^{(p)}(0)=pa\equiv_p 0$. Thus $u_a\in S_0$.
    
    \item[g.] Let $u(x)=4x-2$. Then $u(1)=2$ and $u(2)=6\equiv_5 1$. This means that $u^{(n)}(1)$ is either 1 or 2$\pmod 5$, for every $n\in\mathbb{N}$. This shows that $u(x)\notin L_{1,A}$, for every finite subset $A\subset \mathcal{P}_{\{5\}}$.
   
    \item[h.] Let $u(x)$ be as in \textit{example} (g). Then by induction we have $u^{(n)}(0)=\frac{2}{3}(1-4^n)$,
    which cannot be zero for any $n\in \mathbb{N}$ and so the above polynomial is not in $N_0$. Clearly $m_2=1, ~m_3=3$. For every prime $p\in \mathcal{P}_{\{2,3\}}$, we have that $u^{(p-1)}(0)\equiv_p 0$, by \textit{Fermat's little theorem} and so $u(x)\in S_0$.
 \end{enumerate}

\begin{remark}
The computation of polynomial iterations are very complicated. But the linear polynomials have a nice and easy-to-understand iteration formula: let $u(x)=ax+b$ be a linear polynomial, i.e., $a\in\mathbb{Z}\setminus\{0\}$. Then by induction it follows that for every $n\geq 1$, $$u^{(n)}(x)=a^nx+b\left(\sum\limits_{i=0}^{n-1}a^i\right),~~n\in\mathbb{N}.$$ So $u^{(n)}(r)=a^nr+b\left(\sum\limits_{i=0}^{n-1}a^i\right)$, for each $n\in\mathbb{N}$. Throughout this paper we will refer to
this formula as the \textit{\textbf{linear iteration formula.}}
\end{remark} 

\section{The main tools}
\textit{In this section we will develop the necessary tools. We begin with the proof of Fact 3.1, which indicates that for our purposes it is enough to study the polynomials at non-negative values of $r$. In particular, it shows that there is a one-to-one correspondence between $S_r$ and $S_{-r}$.}

\begin{fact}
Let $u(x)$ be a polynomial of degree $d$ and let $r\in\mathbb{Z}\setminus\{0\}$. Define $v(x):=-u(-x)$. Then $u(x)\in L_{r,\emptyset}^d \iff v(x)\in L_{-r,\emptyset}^d$. Similarly, $u(x)\in N_{r,n}^d \iff v(x)\in N_{-r,n}^d$, and $u(x)\in S_r\iff v(x)\in S_{-r}$.
\end{fact}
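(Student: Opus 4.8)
The plan is to reduce all three equivalences to a single conjugation identity relating the iterates of $u$ at $r$ to those of $v$ at $-r$. First I would prove, by induction on $n$, that
$$v^{(n)}(-r) = -u^{(n)}(r) \qquad \text{for every } n \in \mathbb{N}.$$
The base case $n=1$ is immediate from the definition $v(x) = -u(-x)$, which gives $v(-r) = -u(r)$. For the inductive step, assuming $v^{(n)}(-r) = -u^{(n)}(r)$, I would compute
$$v^{(n+1)}(-r) = v\bigl(v^{(n)}(-r)\bigr) = v\bigl(-u^{(n)}(r)\bigr) = -u\bigl(u^{(n)}(r)\bigr) = -u^{(n+1)}(r),$$
where the third equality again uses $v(x) = -u(-x)$ with $x = u^{(n)}(r)$. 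I would also record that $\deg v = \deg u = d$, since the leading coefficient of $v$ is $(-1)^{d+1}$ times that of $u$ and hence nonzero.

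With this identity in hand, each equivalence becomes a matter of sign bookkeeping. For the nilpotency statement, $u^{(n)}(r) = 0$ holds if and only if $v^{(n)}(-r) = -u^{(n)}(r) = 0$; since the identity holds for \emph{every} iterate, the least index at which $u$ reaches $0$ at $r$ equals the least index at which $v$ reaches $0$ at $-r$, and combined with the equality of degrees this gives $u \in N_{r,n}^d \iff v \in N_{-r,n}^d$. For local nilpotency, I would fix a prime $p$ and note that $u^{(m)}(r) \equiv_p 0$ if and only if $v^{(m)}(-r) \equiv_p 0$, so the existence of a suitable $m$ for $u$ at $r$ is equivalent to its existence for $v$ at $-r$, for each $p$ independently; this yields $u \in L_{r,\emptyset}^d \iff v \in L_{-r,\emptyset}^d$. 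Finally, since $S_r = L_{r,\emptyset} \setminus N_r$, the third equivalence follows by combining the first two across all degrees and indices.

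To justify the ``$\iff$'' cleanly in each case, I would make explicit the symmetry of the construction: applying it to $v$ returns $u$, because $-v(-x) = -\bigl(-u(-(-x))\bigr) = u(x)$. Thus $u \mapsto v$ is an involution, and each implication proved above yields its converse by swapping the roles of $u$ and $v$ (and of $r$ and $-r$). I do not anticipate a genuine obstacle here; the only care required is in tracking signs through the induction and in verifying that it is the \emph{minimal} nilpotency index that is preserved, not merely the property of eventually reaching $0$.
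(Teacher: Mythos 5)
Your proof is correct and follows essentially the same route as the paper: the paper's entire argument is the identity $v^{(n)}(-r)=-u^{(n)}(r)$ proved by induction, from which all three equivalences follow. Your write-up simply makes explicit the details (degree preservation, minimality of the nilpotency index, the prime-by-prime check, and the involution giving the converses) that the paper leaves to the reader.
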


\begin{proof}
    Since $v(-x)=-u(x)$, by induction one sees that $v^{(n)}(-r)=-u^{(n)}(r)$, from which the fact follows.\endproof{}
\end{proof}

\vskip 3pt
 We will now state and prove the CRS Lemma which was mentioned in the introduction. We will also justify its importance in understanding linear locally nilpotent polynomials.

\begin{lemma}[CRS Lemma]
Let $\alpha,\beta,\gamma\in\mathbb{Z}\setminus \{0\}$ be such that there is no $k\in\mathbb{Z}$ such that $\frac{\beta}{\gamma}=\alpha^k$. Then $\mathcal{P}\setminus \cup_{n\in\mathbb{N}}P(\gamma \alpha^n-\beta)$ is an infinite set.
\end{lemma}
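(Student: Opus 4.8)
The plan is to prove the statement by contradiction and to recognize the resulting condition as exactly the hypothesis of the \emph{support problem} for $\mathbb{Q}^{\times}$, which is Theorem 1 of \cite{CS97}. Set $T:=\mathcal{P}\setminus\bigcup_{n\in\mathbb{N}}P(\gamma\alpha^{n}-\beta)$. By the definition of $P(\cdot)$, a prime $p$ belongs to $T$ precisely when $\gamma\alpha^{n}\not\equiv\beta\pmod p$ for every $n\in\mathbb{N}$. Assume toward a contradiction that $T$ is finite; then $\bigcup_{n\in\mathbb{N}}P(\gamma\alpha^{n}-\beta)$ is cofinite in $\mathcal{P}$, so for all but finitely many primes $p$ there exists $n=n(p)\in\mathbb{N}$ with $\gamma\alpha^{n}\equiv\beta\pmod p$.

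First I would dispose of the degenerate cases $\alpha=\pm1$ directly, since these are the cases excluded from the support problem. If $\alpha=1$, the hypothesis forces $\beta\neq\gamma$, so $\gamma\alpha^{n}-\beta=\gamma-\beta$ is a single nonzero integer and $\bigcup_{n}P(\gamma\alpha^{n}-\beta)=P(\gamma-\beta)$ is finite; if $\alpha=-1$, the hypothesis forces $\beta\neq\pm\gamma$, and $\gamma\alpha^{n}-\beta$ takes only the two nonzero values $\gamma-\beta$ and $-\gamma-\beta$, so again only finitely many primes occur. In both cases $T$ is cofinite, hence infinite, so I may assume $|\alpha|\geq 2$. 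Now discard from the cofinite set above the finitely many primes dividing $\alpha\gamma$. For each remaining prime $p$ the class $\beta\gamma^{-1}\bmod p$ is well defined and equals $\alpha^{n}$ for some $n\geq 1$, i.e.\ $\beta\gamma^{-1}$ lies in the cyclic subgroup $\langle\alpha\rangle\subseteq\mathbb{F}_{p}^{\times}$. Because $\mathbb{F}_{p}^{\times}$ is cyclic, this membership is equivalent to the divisibility $\operatorname{ord}_{p}(\beta\gamma^{-1})\mid\operatorname{ord}_{p}(\alpha)$, and hence to the implication that $\alpha^{m}\equiv1\pmod p$ forces $(\beta\gamma^{-1})^{m}\equiv1\pmod p$ for every $m\in\mathbb{N}$.

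The previous paragraph shows that, with $x=\alpha$ and $y=\beta\gamma^{-1}$ regarded as elements of $\mathbb{Q}^{\times}$, the hypothesis of the support problem holds for all but finitely many primes $p$. Applying Theorem 1 of \cite{CS97} then yields $\beta\gamma^{-1}=\alpha^{k}$ for some $k\in\mathbb{Z}$, which directly contradicts the assumption that $\beta/\gamma=\alpha^{k}$ has no integer solution $k$. Therefore $T$ cannot be finite, proving the lemma. I expect the main obstacle to be this middle step rather than the invocation of \cite{CS97}: one must translate ``$p$ divides some $\gamma\alpha^{n}-\beta$'' into membership in $\langle\alpha\rangle$ and then, using cyclicity of $\mathbb{F}_{p}^{\times}$, into the precise order-divisibility condition matching the support problem, all while carefully bookkeeping the finitely many excluded primes (those in $T$ and those dividing $\alpha\gamma$) so that ``for almost all $p$'' is preserved. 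Once this reformulation is in place, \cite{CS97} closes the argument at once.
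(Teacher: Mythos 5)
Your proposal is correct and follows essentially the same route as the paper's proof: assume the complement is finite, pass to the cofinite set of primes where $\beta\gamma^{-1}$ is a power of $\alpha$ modulo $p$, convert this to the order-divisibility hypothesis, and invoke Theorem 1 of \cite{CS97} with $x=\alpha$, $y=\beta\gamma^{-1}$, $F=\mathbb{Q}$ to reach a contradiction. Your separate elementary treatment of $\alpha=\pm1$ and the explicit cyclicity argument are harmless refinements (the paper handles neither explicitly, relying on the implication $\alpha^{k}\equiv_p 1\Rightarrow(\beta\gamma^{-1})^{k}\equiv_p 1$ directly), but they do not change the substance of the argument.
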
 

\begin{proof}
    Suppose, if possible, that $\mathcal{P}\setminus \cup_{n\in\mathbb{N}}P(\gamma \alpha^n-\beta)$ is a finite set. This means that the set $\cup_{n\in\mathbb{N}}P(\gamma \alpha^n-\beta)$ contains all but finitely many primes. Then $\cup_{n\in\mathbb{N}}P(\gamma \alpha^n-\beta)\setminus P(\gamma)$ also contains all but finitely many primes. So, for almost all $p\in \mathcal{P}_{P(\gamma)}, ~\alpha^{n_p}\equiv_p \beta\gamma^{-1}$ for some $n_p\in \mathbb{N}$ (choice of $n_p$ possibly depends on $p$). So, if $k\in\mathbb{N}$ is such that $\alpha^k\equiv_p 1$, then $(\beta \gamma^{-1})^k\equiv_p (\alpha^{n_p})^k\equiv_p 1$. Thus taking $\alpha=x$, $\beta\gamma^{-1}=y$ and $F=\mathbb{Q}$ in Theorem 1 of \cite{CS97}, we arrive at a contradiction! Thus $\mathcal{P}\setminus \cup_{n\in\mathbb{N}}P(\gamma \alpha^n-\beta)$ is an infinite set.  \endproof{}
\end{proof} 

\begin{remark}[Importance of the CRS lemma]
Let $r\in \mathbb{Z}\setminus\{0\}$ and $u=u(x)=ax+b\in L_{r,\emptyset}^1$ with $a\neq \pm 1$. By the \textit{linear iteration formula}, we get
$$u^{(n)}(r)=\frac{a^n(r-ar-b)+b}{1-a}.$$ Since $u\in L_{r,\emptyset}^1$, we can say that $\mathcal{P}\setminus \cup_{n\in\mathbb{N}} P(\gamma \alpha^n-\beta)$ is a finite set (in fact, it is an empty set), where $\alpha=a,\beta=-b$ and $\gamma=r-ar-b$. Then it follows from the Lemma 3.2 that either $\frac{\beta}{\gamma}$ or $\frac{\gamma}{\beta}$ is a power of $\alpha$, i.e., $b=-a^m(r-ar-b),$ for some $m\in\mathbb{Z}$. Moreover, if $m\in\mathbb{N}$ we can say that $u\in N_r$. So, to summarize, if $u$ is in $S_r$ (with $a\notin\{ \pm 1\}$), then $\text{there exists}~m\in\mathbb{N}\cup \{0\}$ such that $a^mb=b+ar-r$.
\end{remark}

\subsubsection*{Reduction of polynomials.} Let $r\in \mathbb{N}$ and $u=u(x)\in\mathbb{Z}[x]$ such that $r|u(0)$. Define $v=v(x):=\frac{1}{r}u(rx)$. Note that $v$ is a polynomial over $\mathbb{Z}$ of the same degree as $u$ and $rv(1)=u(r)$. Using induction one can see that $rv^{(n)}(1)=u^{(n)}(r),\text{ for all} ~n\in\mathbb{N}$. Then it follows that $u(x)$ is weakly locally nilpotent at $r$ outside some $A$ iff $v(x)$ is weakly locally nilpotent at 1 outside $A\cup P(r)$, and also $u(x)$ is nilpotent at $r$ iff $v(x)$ is nilpotent at 1. Thus we can reduce any polynomial $u(x)$ in $L_{r,\emptyset}^d$ with the extra condition that $r|u(0)$ to a polynomial $v(x)$ in $L_{1,P(r)}^d$. We will call this the \textit{reduction of $u(x)$ to $v(x)$}, where, of course, $u$ and $v$ are as above.

\section{Arbitrary $d$ and $r\in\{0,1,-1\}$}

In this section, we state and prove two theorems that provide the classification of all locally nilpotent polynomials at $r$ when $r\in\{0,\pm 1\}$. We start with the $r=1$ case.

\begin{theorem}
The following is the list of all polynomials in $L_{1,\emptyset}$\textup{:}
\begin{enumerate}
    \item[(1)] $ (x-1)p(x)$, with $p(x)\in\mathbb{Z}[x]\setminus\{0\}$ (\textit{Nilpotent of nilpotency index 1}).
    
    \item[(2)] $ -2x+4+p(x)(x-1)(x-2)$, with $p(x)\in\mathbb{Z}[x]$ (\textit{Nilpotent of nilpotency index 2}).
    
    \item[(3)] $ -2x^2+7x-3+p(x)(x-1)(x-2)(x-3)$, with $p(x)\in\mathbb{Z}[x]$ (\textit{Nilpotent of nilpotency index 3}).
    
    \item[(4)] $ x+1$ (\textit{Locally nilpotent but not nilpotent}).
\end{enumerate}
\end{theorem}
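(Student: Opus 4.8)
The plan is to split $L_{1,\emptyset}$ into its nilpotent part $N_1$ and its complement $S_1=L_{1,\emptyset}\setminus N_1$, and to classify each separately; items (1)--(3) will come from $N_1$ and item (4) from $S_1$. Everything rests on one elementary fact about integer polynomials: for all $a,b\in\mathbb{Z}$ one has $(a-b)\mid\bigl(u(a)-u(b)\bigr)$. Applied repeatedly along an orbit this controls the successive differences very tightly, and it is the engine for bounding the nilpotency index.

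First I would treat $N_1$. Suppose $u$ is nilpotent at $1$ of index $i$, with orbit $c_0=1,c_1,\dots,c_i=0$. Since $i$ is least with $u^{(i)}(1)=0$, no earlier term vanishes, and a short argument shows $c_0,\dots,c_i$ are pairwise distinct (a repeat would force periodicity and block the orbit from ever reaching $0$). Writing $d_j:=c_j-c_{j+1}$, the divisibility fact gives $d_j\mid d_{j+1}$, hence $d_0\mid d_j$ for all $j$; as $\sum_{j=0}^{i-1}d_j=c_0-c_i=1$, we get $d_0=\pm1$. The value $d_0=1$ forces $c_1=0$ and $i=1$; otherwise $d_0=-1$, so $c_1=2$ and $i\ge2$. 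To cap the index I would feed the endpoints back into the same fact: with $c_1=2$ and $c_i=0$ the relation $(c_0-c_{i-1})\mid(c_1-c_i)=2$ forces $c_{i-1}\in\{-1,0,2,3\}$, and discarding $0=c_i$ and (for $i\ge3$) $2=c_1$ leaves $c_{i-1}\in\{-1,3\}$. If $c_{i-1}=-1$ then $|d_{i-1}|=1$, so the divisibility chain forces every $|d_j|=1$; but a sequence of distinct integers moving by unit steps is monotonic, which is impossible once $c_0=1<2=c_1$ while the orbit must still reach $0$. If $c_{i-1}=3$, then $(c_0-c_{i-2})\mid(c_1-c_{i-1})=-1$ forces $c_{i-2}\in\{0,2\}$, consistent only when $i=3$. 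Thus $i\le3$, with orbits exactly $1\to0$, $1\to2\to0$, $1\to2\to3\to0$; parametrizing by the prescribed values of $u$ on $\{1\}$, $\{1,2\}$, $\{1,2,3\}$ and adding an arbitrary multiple of $\prod_j(x-c_j)$ yields precisely families (1), (2), (3).

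Finally, for $S_1$, Fact 1.1 forces $u=ax+b$ to be linear. For $a=1$ the linear iteration formula gives $u^{(n)}(1)=1+nb$, which is locally nilpotent at $1$ iff $b$ has no prime divisor, i.e.\ $b=\pm1$; here $b=1$ yields $x+1\in S_1$ while $b=-1$ is already nilpotent. For $a=-1$ the orbit is $2$-periodic on $\{1,b-1\}$, which can meet $0$ modulo every prime only in the nilpotent case. For $|a|\ge2$ I would invoke the consequence of the CRS Lemma recorded in Remark 3.3: there is $m\in\mathbb{N}\cup\{0\}$ with $a^mb=b+a-1$, whose only integer solutions are $b=1$ (any $a$) and $(a,b,m)=(-2,-1,2)$. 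Each surviving candidate is then killed by a single bad prime: for $ax+1$ any prime $p\mid a$ makes the orbit constantly $1$ modulo $p$, and for $-2x-1$ the prime $p=2$ does the same. Hence $S_1=\{x+1\}$, giving (4).

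I expect the index bound to be the crux. The difficulty is selecting exactly which instances of $(c_j-c_k)\mid(c_{j+1}-c_{k+1})$ to use: the consecutive chain alone still permits spurious data such as $1\to2\to4\to0$, so the non-consecutive relations anchored at $c_i=0$ are essential, together with the unit-step monotonicity observation that closes the $c_{i-1}=-1$ branch. The linear analysis is comparatively routine once Remark 3.3 narrows the candidates, the only subtlety being that the CRS condition is merely necessary, so every candidate must still be tested against an explicit prime before it can be discarded.
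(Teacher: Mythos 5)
Your proposal is correct, but it follows a genuinely different route from the paper's. The paper never splits $L_{1,\emptyset}$ into $N_1\sqcup S_1$: it runs one unified case analysis on successive orbit values, using local nilpotency at well-chosen primes to force $u(1)\in\{0,2\}$, then $u(2)\in\{0,3\}$, then $u(3)\in\{0,4\}$, and finally an induction showing that once $u(3)=4$ one must have $u(n)=n+1$ for all $n$, i.e.\ $u=x+1$; this argument is entirely elementary and self-contained, and uses neither Fact 1.1 nor Lemma 3.2/Remark 3.3 (indeed, in the paper Corollary 4.3, $S_1=\{x+1\}$, is deduced \emph{from} Theorem 4.1, not used to prove it). You instead classify $N_1$ by the divisibility-of-differences argument anchored at both ends of the orbit --- essentially the technique the paper saves for Theorem 4.4, the $r=0$ case --- and classify $S_1$ via Fact 1.1 plus Remark 3.3 plus explicit obstructing primes, which is essentially the Section 5 machinery of Theorem 5.1. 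Both halves of your argument check out: the pairwise-distinctness of the orbit, the chain $d_0\mid d_1\mid\cdots\mid d_{i-1}$ summing to $1$, the anchored relations forcing $c_{i-1}\in\{-1,3\}$ and then $i=3$, and the unit-step monotonicity observation are all valid; your solution set of $b(a^m-1)=a-1$ for $|a|\ge 2$ is complete; and you rightly treat the CRS condition as merely necessary, eliminating $ax+1$ and $-2x-1$ by exhibiting explicit bad primes. There is no circularity either, since Fact 1.1, Lemma 3.2 and Remark 3.3 all precede Theorem 4.1 in the paper. The trade-off: your proof is more modular and replaces the paper's delicate infinite induction with a finite divisibility argument, but it imports the deep external inputs (Fact 1.1 rests on \cite{TS13}, the CRS Lemma on \cite{CS97}) into a theorem the paper proves entirely by hand; the paper's route keeps Theorem 4.1 elementary and reserves the heavy tools for Section 5, where they are unavoidable.
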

\begin{proof}
    
Let $u=u(x)\in L_{1,\emptyset}^d$. We will consider the following three cases:

\subsubsection*{Case 1. $u(1)-1\not \in\{\pm 1\}$.}
Then $P(u(1)-1)\neq\emptyset$ and for each $p\in P(u(1)-1)$ we have $u(1)\equiv_p 1$, i.e., for each $p\in P(u(1)-1)$, $m_p$ does not exist. This is a contradiction to the hypothesis that $u\in L_{1,\emptyset}^d$!

\subsubsection*{Case 2. $u(1)-1=-1$.}
These are just the polynomials listed in (1).

\subsubsection*{Case 3. $u(1)-1=1\textup{ or }u(1)=2$.}
We now explore the possibilities for $u(2)$. If $u(2)=0$, then $u(x)$ is of the form listed in (2). So suppose that $u(2)\neq 0$. Of course $u(2)\notin\{1,2\}$ as otherwise we get $u^{(n)}(1)=$ 1 or 2, for every $n\in\mathbb{N}$ and hence it cannot be in $L_{1,\emptyset}^d$. Thus $u(2)$ is either $\le -1$ or $\ge 3$, i.e., $|u(2)-1|\ge 2$. In other words, $P(u(2)-1)\neq \emptyset$. Let $p\in P(u(2)-1)$. Then $u(2)\equiv_p 1$. As $u$ is locally nilpotent at $1$, $p$ must be 2 so that $u(2)-1$ must be of the form $\pm 2^t$, for some $t\in\mathbb{N}$. To arrive at a contradiction suppose that $u(2)\neq 3$. That means $u(2)$ is either $\geq 4$ or $\leq -1$. So we consider these two possibilities one by one.

\textbf{Possibility 1}. $u(2)\ge 4$. We know that $u(2)-1=2^t$, i.e., $u(2)$ is odd. So, in fact, $u(2)\ge 5$. Thus there exists $p\in \mathcal{P}_{\{2\}}$ such that $p\in P(u(2)-2)$ and $u^{(n)}(2)\equiv_p 2$, for every $n\in\mathbb{N}$, a contradiction to the hypothesis that $u\in L_{1,\emptyset}^d$!

\textbf{Possibility 2}. $u(2)\le -1$. We know that $u(2)-1=-2^t$, i.e., $u(2)$ is odd so that $u(2)-2$ is odd as well and less or equal to $-3$. Using the same argument as in possibility 1 we get a contradiction!

 Thus $u(2)$ must be 3. Next, we look at $u(3)$. If $u(3)=0$, then $u(x)$ is of the form listed in (3). So suppose that $u(3)\neq 0$. For the same reason as above $u(3)\notin\{0,1,2,3\}$. Thus $u(3)$ is either $\le -1$ or $\ge 4$. To get to a contradiction suppose that $u(3)\neq 4$. Then either $u(3)-3\le -4$ or $\geq 2$. In any case, $P(u(3)-3)\neq\emptyset$. Let $p\in P(u(3)-3)$. Then $u(3)\equiv_p 3$ so that $p\in \{2,3\}$, which follows because $u\in L_{1,\emptyset}^d$. If $p=2$ then $u(1)\equiv_p 0$. Then $u(3)\equiv_p 3\equiv_p 1\not\equiv_p u(1)$, which is an impossibility as $3\equiv_p 1$ must imply $u(3)\equiv_p u(1)$! This means $p=3$ and $u(3)-3=\pm 3^s$, for some $s\in\mathbb{N}$. For similar reason, $P(u(3)-1)\neq \emptyset$ and for each $q\in P(u(3)-1)$, $u(3)\equiv_q 1$ which implies $q\in\{2,3\}$. But $q|u(3)-1=2\pm 3^s$, which is a contradiction to the fact that $q\in\{2,3\}$! Thus $u(3)=4$.

 Next, we look at $u(4)$. We claim that no further iteration of $u$ at $1$ can be zero and we would like to prove this by showing that $u(n-1)=n$, $\text{ for all} ~n\ge 4$ and that would mean $u(x)=x+1$. We use \textit{mathematical induction} to prove this claim. Let $u(j-1)=j$, for every $2\le j\le n$, for some $n\ge 4$ and we want to show that $u(n)=n+1$. Since $u(1)=2, ~u(2)=3, ~u(3)=4,~\ldots, u(n-1)=n$, there is a polynomial $p(x)$ such that $u(x)=x+1+p(x)(x-1)(x-2)(x-3)\cdots (x-n+1)$. So $u(n)=n+1+p(n)\cdot (n-1)!$, which must be different from 0 as $n\ge 4$. If $u(n)=i$ for some $i\in\{1,\ldots,n\}$, then the iterations $u^{(m)}(1)\in\{1,\ldots,n\}$, for every $m\in\mathbb{N}$ and that means for only finitely many primes $p$, $m_p$ can exist. Thus $u$ cannot be locally nilpotent at $1$ and $u(n)\not \in \{0,\ldots,n\}$. This means $u(n)$ is either $\ge n+1$ or $\le -1$. For a contradiction, suppose that $u(n)\neq n+1$. Then, either $u(n)-n\ge 2$ or $u(n)-n\le -(n+1)$. In any case, we get $P(u(n)-n)\neq\emptyset$. For each $p\in P(u(n)-n)$ we have $u(n)\equiv_p n$, which is an impossibility unless $p\le n$. Suppose, if possible, $p<n$. Then $n\equiv_p a$ for some $a\in \{1,\ldots,p-1\}$. Note that $a$ cannot be zero as otherwise $n\equiv_p 0$, so that $u(0)\equiv_p u(n)\equiv_p n\equiv_p 0$, i.e., $p|u(0)$ and so $p|u(p)=p+1$, an impossibility! Now by the induction hypothesis we have $u(a)=a+1$ and also $a\equiv_p n\equiv_p u(n)\equiv_p u(a)$, i.e., $u(a)\equiv_p a$, which is absurd as this means $m_p$ does not exist! So $p=n$, i.e., $n$ is prime and $u(n)=n\pm n^s$, for some $s\in\mathbb{N}$. For similar reason, $P(u(n)-1)\neq \emptyset$. So for every $q\in P(u(n)-1)$, $u(n)\equiv_q 1$ and it follows that $q$ is less than or equal to $n$. But if $q=n$, then $n=q|u(n)-1=(n-1)\pm n^s$ and so $n|1$, an impossibility! So, in fact, we have $q\le n-1$. We can choose $b\in\{0,\ldots,q-1\}$ such that $n\equiv_q b+1$. By the induction hypothesis $u(b+1)=b+2$ and also $u(b+1)\equiv_q u(n)\equiv_q 1$. These two relations together imply $b+1\equiv_q 0$, i.e., $q|n$. But, since $n$ is a prime, $n=q$, which is an impossibility, and hence $u(n)=n+1$.\endproof{}
\end{proof}

\begin{corollary}
It follows from Fact 3.1 and Theorem 4.1 that the following is the list of all polynomials in $L_{-1,\emptyset}$:
\begin{enumerate}
    \item[(1)] $ (x+1)p(x)$, with $p(x)\in\mathbb{Z}[x]\setminus\{0\}$ (Nilpotent of nilpotency index 1).
    
    \item[(2)] $ -2x-4+p(x)(x+1)(x+2)$, with $p(x)\in\mathbb{Z}[x]$ (Nilpotent of nilpotency index 2).
    
    \item[(3)] $ 2x^2+7x+3+p(x)(x+1)(x+2)(x+3)$, with $p(x)\in\mathbb{Z}[x]$ (Nilpotent of nilpotency index 3).
    
    \item[(4)] $ x-1$ (Locally nilpotent but not nilpotent).
\end{enumerate}
\end{corollary}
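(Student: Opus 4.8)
The plan is to invoke Fact 3.1 directly. That fact establishes that the map $u(x)\mapsto v(x):=-u(-x)$ is a degree-preserving bijection carrying $L_{1,\emptyset}^d$ onto $L_{-1,\emptyset}^d$. Since Theorem 4.1 already enumerates every polynomial in $L_{1,\emptyset}$, all that remains is to apply this involution to each of the four families on that list and verify that the images are exactly the four families asserted in the corollary. No new dynamical input is needed; the content is purely the translation of the $r=1$ classification across the correspondence.

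First I would treat the four cases one at a time, substituting $-x$ and then negating. For family (1), $u(x)=(x-1)p(x)$ yields $v(x)=-(-x-1)p(-x)=(x+1)p(-x)$; since $p(x)\mapsto p(-x)$ is a bijection of $\mathbb{Z}[x]\setminus\{0\}$ onto itself, the image is precisely $(x+1)q(x)$ with $q\in\mathbb{Z}[x]\setminus\{0\}$. For family (2) the vanishing product $(x-1)(x-2)$ has even degree, so $(-x-1)(-x-2)=(x+1)(x+2)$, and a short computation gives $v(x)=-2x-4+q(x)(x+1)(x+2)$ with $q(x):=-p(-x)$ ranging over all of $\mathbb{Z}[x]$. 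For family (3) the product $(x-1)(x-2)(x-3)$ has odd degree, so the substitution $x\mapsto -x$ introduces an overall factor $-1$ on the product; combining this with the outer negation in $v(x)=-u(-x)$ and with the transformation of the quadratic head $-2x^2+7x-3$ produces $v(x)=2x^2+7x+3+q(x)(x+1)(x+2)(x+3)$ with $q(x):=p(-x)$. Finally, family (4) sends $x+1$ to $x-1$.

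Because these four computations exhaust the list in Theorem 4.1 and the correspondence $u\mapsto -u(-x)$ is a bijection, the four families so obtained constitute exactly the polynomials in $L_{-1,\emptyset}$, which is the assertion of the corollary. The only point demanding care is the bookkeeping of signs: the negation built into $v(x)=-u(-x)$, the parity of the degree of the vanishing product $(x-1)\cdots(x-k)$ (even when $k=2$, odd when $k=3$), and the reparametrization $q(x)=\pm p(-x)$ of the free polynomial, whose sign must be chosen so that $q$ sweeps out the full set $\mathbb{Z}[x]$ (or $\mathbb{Z}[x]\setminus\{0\}$) as $p$ does. There is no substantive obstacle: once Fact 3.1 furnishes the bijection, the argument reduces to this routine sign verification.
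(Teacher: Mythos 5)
Your proposal is correct and is exactly the argument the paper intends: Corollary 4.2 is stated as an immediate consequence of Fact 3.1 and Theorem 4.1, with the proof being precisely the application of the involution $u(x)\mapsto -u(-x)$ to each of the four families, and your sign bookkeeping (including $q(x)=-p(-x)$ for family (2) and $q(x)=p(-x)$ for family (3)) checks out. Nothing further is needed.
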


\begin{corollary}
The sets $S_1$ and $S_{-1}$ are singleton sets.
\end{corollary}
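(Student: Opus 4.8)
The plan is to read both sets straight off the classification already in hand, with essentially no new computation. Recall that by definition $S_1 = L_{1,\emptyset}\setminus N_1$, so the entire task reduces to separating the nilpotent polynomials from the non-nilpotent ones inside the list produced by Theorem 4.1.

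First I would invoke Theorem 4.1, which exhibits every element of $L_{1,\emptyset}$ in the four families (1)--(4). The parenthetical labels in that theorem already record that families (1), (2) and (3) consist precisely of polynomials nilpotent at $1$, of nilpotency indices $1$, $2$ and $3$ respectively; hence all three families lie in $N_1$ and are discarded when we pass to $S_1 = L_{1,\emptyset}\setminus N_1$. The only surviving candidate is the single polynomial $x+1$ coming from family (4).

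Next I would confirm that $x+1$ genuinely belongs to $S_1$ and not to $N_1$, which is exactly the content of Example (e): with $u(x)=x+1$ the linear iteration formula gives $u^{(n)}(1)=n+1\neq 0$ for all $n\in\mathbb{N}$, so $u\notin N_1$, while $u^{(p-1)}(1)=p\equiv_p 0$ for every prime $p$, so $u\in L_{1,\emptyset}$. Combining this with the previous paragraph yields $S_1=\{x+1\}$, a singleton.

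Finally, for $S_{-1}$ I would apply the bijection of Fact 3.1, namely $u\in S_1\iff v:=-u(-x)\in S_{-1}$, which transports the singleton $S_1$ to a singleton $S_{-1}$; evaluating $v=-u(-x)$ for $u=x+1$ produces the explicit element $x-1$. Equivalently, one could read $S_{-1}=\{x-1\}$ directly off Corollary 4.2 via the same nilpotent/non-nilpotent split. I do not expect a genuine obstacle, since Theorem 4.1 (and Corollary 4.2) have done all the heavy lifting; the only point needing care is verifying that the three nilpotent families are \emph{completely} accounted for inside $N_1$, so that nothing other than $x+1$ can survive into $S_1$.
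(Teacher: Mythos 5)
Your proposal is correct and follows essentially the same route as the paper: read $S_1=\{x+1\}$ off Theorem 4.1 by discarding the three nilpotent families, then transfer to $S_{-1}=\{x-1\}$ via the bijection of Fact 3.1. The extra verification that $x+1$ genuinely lies in $S_1$ (Example (e)) is a nice touch of completeness, though the paper treats it as already established.
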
 

\begin{proof}
    Let $u(x)\in S_1$. Then by Theorem 4.1, $u(x)$ must be $x+1$ as all the other polynomials in the list (1)-(4) in there are in $N_1$. Now by Fact 3.1, it follows that $S_{-1}=\{x-1\}$. \endproof{}
\end{proof}   

 We end this section by looking at the case when $r=0.$

\begin{theorem}
The following is the list of all polynomials in $L_{0,\emptyset}$\textup{:}
\begin{enumerate}
    \item[(1)] $x+b$, with $b\in\mathbb{Z}\setminus \{0\}$ (\textit{Locally nilpotent but not nilpotent}).
    
    \item[(2)] $ ax+b$, with $P(b)\supseteq P(a)\neq \emptyset$ and $b\neq 0$ (\textit{Locally nilpotent but not nilpotent}). 

    \item[(3)] $xp(x)$, with $p(x)\in\mathbb{Z}[x]\setminus\{0\}$ (\textit{Nilpotent of nilpotency index 1}).

    \item[(4)] $(x-a)p(x)$, with $a\in\mathbb{Z}\setminus\{0\}$ and $p(x)\in \mathbb{Z}[x]$ s.t $p(0)=-1$ (\textit{Nilpotent of nilpotency index 2}).
    
\end{enumerate}
\end{theorem}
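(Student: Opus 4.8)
The plan is to split the analysis according to whether $u$ is nilpotent at $0$, using Fact 1.1 to collapse the non-nilpotent case to linear polynomials, and the periodicity of $0$ to control the nilpotent case. Write $c:=u(0)$, the first point of the orbit $\mathcal{O}_u(0)$.

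\emph{The non-nilpotent case.} Suppose $u\in L_{0,\emptyset}^{d}$ is not nilpotent at $0$, so $u\in S_0$. By Fact 1.1 a non-nilpotent, weakly locally nilpotent polynomial (here with $A=\emptyset$) must be linear, so $u(x)=ax+b$ with $a\in\mathbb{Z}\setminus\{0\}$. Since $u(0)=b$, non-nilpotency forces $b\neq 0$ (otherwise $u^{(1)}(0)=0$). I would then feed the \emph{linear iteration formula} at $r=0$, namely $u^{(n)}(0)=b\sum_{i=0}^{n-1}a^{i}$, into the local-nilpotency condition. For $a=1$ this equals $nb$, which is $\equiv 0\pmod p$ for $n=p$ and is never $0$, giving family (1). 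For $a=-1$ one gets $u^{(2)}(0)=0$, so $u$ is actually nilpotent and is excluded here (it reappears in (4)). For $|a|\ge 2$ put $s_n:=\sum_{i=0}^{n-1}a^i$; fixing a prime $p$, if $p\mid a$ then $s_n\equiv 1\pmod p$ for all $n$, so $p$ can be reached only when $p\mid b$, whereas if $p\nmid a$ one always finds $n$ with $p\mid s_n$ (take $n=p$ when $a\equiv 1\pmod p$, and $n=\mathrm{ord}_p(a)$ otherwise). Hence local nilpotency is equivalent to $P(a)\subseteq P(b)$, and since $|a|\ge 2$ forces $a^n\ne 1$ the polynomial is genuinely non-nilpotent; this is family (2).

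\emph{The nilpotent case.} Here $u^{(n)}(0)=0$ for some minimal $n$, so $0$ is a periodic point of $u$ of exact period $n$. The crux of the theorem is to show $n\le 2$. I would invoke the standard divisibility property of integer polynomials, $(x_i-x_j)\mid(u(x_i)-u(x_j))$, applied around the cycle $x_0=0,\,x_1=u(0),\ldots,x_{n-1},\,x_n=0$. Writing $d_i:=x_{i+1}-x_i$ and chasing the divisibilities cyclically shows that all $|d_i|$ equal a common $D>0$, and likewise that all $|x_{i+1}-x_{i-1}|$ are equal; since each $d_i=\pm D$ while $\sum_i d_i=0$, the value $|x_{i+1}-x_{i-1}|=|d_i+d_{i-1}|$ cannot be the nonzero constant $2D$ (that would make all $d_i$ share a sign and the sum nonzero), so it is identically $0$, i.e. $x_{i+1}=x_{i-1}$, forcing $n\le 2$. \textbf{This cycle-length step is the main obstacle}, as it is where integrality of the coefficients is used essentially. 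Granting $n\le 2$: index $1$ means $u(0)=0$, so $x\mid u$ and $u=xp(x)$ with $p\ne 0$, giving (3); index $2$ means $u(0)=c\ne 0$ and $u(c)=0$, so $(x-c)\mid u$, and comparing constant terms in $u=(x-c)p(x)$ gives $-c\,p(0)=c$, i.e. $p(0)=-1$, giving (4) with $a=c$.

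Finally I would verify the converse, that every polynomial in families (1)--(4) lies in $L_{0,\emptyset}$ with the stated nilpotency type. For (1) and (2) this is exactly the computation above (and is already illustrated in Examples (f) and (h)), while (3) and (4) produce the orbits $0\mapsto 0$ and $0\mapsto a\mapsto 0$, which are trivially locally nilpotent. Combining the two cases then yields precisely the list (1)--(4).
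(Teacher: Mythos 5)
Your proposal is correct and follows essentially the same route as the paper: the non-nilpotent case is handled identically (Fact 1.1 reduces to linear $ax+b$, then the linear iteration formula and the prime analysis split into $a=1$, $a=-1$, $|a|\ge 2$ with $P(a)\subseteq P(b)$), and the nilpotent case rests on the same key idea of the cyclic divisibility chain of consecutive orbit differences $d_i\mid d_{i+1}$ forcing $|d_i|$ constant and $\sum d_i=0$ forcing period at most $2$. The only cosmetic difference is in closing the cycle-length argument: you run a second divisibility chain on the skip-one differences $x_{i+1}-x_{i-1}$, whereas the paper locates an adjacent sign change $u_{k-1}=-u_k$ and deduces eventual $2$-periodicity, then uses $u^{(2)}(0)=u^{(m+2)}(0)=u^{(m)}(0)=0$; both arguments are valid.
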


\begin{proof}
First suppose that $u$ is nilpotent of nilpotency index $m$, for some $m\in\mathbb{N}$. If $u(0)=0$, then $m=1$ and $u(x)=xp(x),$ for some non-zero $p(x)\in\mathbb{Z}[x]$, which is (3) in the list. So suppose that $u(0)\neq 0$. Define $$u_0:=u(0),~u_n:=u^{(n+1)}(0)-u^{(n)}(0),~n\in\mathbb{N}.$$ Then $u_{n+1}=u^{(n+2)}(0)-u^{(n+1)}(0)= u(u^{(n+1)}(0))-u(u^{(n)}(0)).$ That means $u_n$ divides $u_{n+1},\text{ for all}~n\in\mathbb{Z}_{\ge 0}$. We also have $u^{(m)}(0)=0$ so that $u_m=u^{(m+1)}(0)-u^{(m)}(0)=u^{(m+1)}(0)=u_0$. 
As $u_0|u_1|\ldots|u_m=u_0,$ it follows that $u_n=\pm u_0,$ for all $n.$ Also note that $u_0+\cdots+u_{m-1}=u^{(m)}(0)=0$. This means $m$ must be even and half of these integers is positive and the other half is negative (since $|u_n|=|u_0|,\text{ for all}~n$). So there exists $k\in\{1,\ldots,m-1\}$ such that $u_{k-1}=-u_k,$ i.e., $u^{(k)}(0)-u^{(k-1)}(0)=u^{(k)}(0)-u^{(k+1)}(0),$ i.e., $u^{(k+1)}(0)=u^{(k-1)}(0)$. Thus $u^{(n+2)}(0)=u^{(n)}(0),\text{ for all} ~n\ge k-1$ and so, in particular, $0=u^{(m)}(0)=u^{(m+2)}(0)=u^{(2)}(0)$. Hence, $m=2$ and $u(x)=(x-a)p(x)$, with $a\in\mathbb{Z}\setminus\{0\}$ and $p(x)\in\mathbb{Z}[x]$ with $p(0)=-1$; here $a=u(1),$ which is (4) in the list.

 Now suppose that $u\in S_0$. Then by Fact 1.1, it must be linear. Let $u(x):=ax+b, a\neq 0$. Note that if $b\neq0$ as otherwise $u$ would be nilpotent. When $a=1$, every $u(x)\in S_0$: in fact if $u(x)=x+b$, then by the \textit{linear iteration formula}, $u^{(n)}(0)=b(1+\cdots+1)=bn$, which is non-zero $\text{ for all}~n\in\mathbb{N}$ and for each prime $p$, $u^{(p)}(0)=bp\equiv_p 0$. When $a=-1$, $u^{(2)}(0)=0$. So $a$ must be different from $-1$. Thus we can assume that $|a|\geq 2$, i.e., $P(a)$ is a non-empty, finite set. Again, using the \textit{linear iteration formula}, we get $u^{(n)}(0)=b(1+\cdots+a^{n-1}),~n\in\mathbb{N}$. Suppose, if possible, there exists some prime $p$ in $P_{P(b)}(a)$, i.e., there is a prime $p$ such that $p|a$ but $p\nmid b$. Then $u^{(n)}(0)\equiv_p b$, for every $n\in\mathbb{N}$ and that means $u$ cannot be locally nilpotent. Thus $P(b)\supseteq P(a)\neq\emptyset$. If $p\in P(b)$, then $m_p=1$.\\
If $p\notin P(b)\cup P(a-1)$, 
then $u^{(p-1)}(0)=\frac{b}{a-1}(a^{p-1}-1)\equiv_p 0$, which follows from \textit{Fermat's little theorem}.\\
Finally if $p\in P(a-1)$, then $u^{(p)}(0)=b(1+\cdots+a^{p-1})\equiv_p b(1+\cdots+1)\equiv_p 0$. Thus $m_p$ exists for every $p\in\mathcal{P}$.\endproof{}
\end{proof}

\begin{corollary}
The following is the list of all polynomials in $S_{0,\emptyset}$\textup{:}
\begin{enumerate}
    \item[(1)] $x+b$, with $b\in\mathbb{Z}\setminus \{0\}$.
    
    \item[(2)] $ ax+b$, with $P(b)\supseteq P(a)\neq \emptyset$ and $b\neq 0$.
\end{enumerate}
\end{corollary}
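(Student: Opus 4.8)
The plan is to obtain this classification directly from Theorem 4.4, which already enumerates every polynomial in $L_{0,\emptyset}$. By definition $S_0=L_{0,\emptyset}\setminus N_0$ (and $S_{0,\emptyset}$ is just $S_0$, since $L_0=L_{0,\emptyset}$), so the entire task reduces to sorting the four families produced by Theorem 4.4 according to membership in $N_0$. First I would recall those four families: $x+b$ with $b\neq 0$ (family 1), $ax+b$ with $P(b)\supseteq P(a)\neq\emptyset$ and $b\neq 0$ (family 2), $xp(x)$ (family 3), and $(x-a)p(x)$ with $a\neq 0$ and $p(0)=-1$ (family 4). Families 1 and 2 are exactly the two lists claimed in the corollary, so I need only check that families 3 and 4 are removed by the $\setminus N_0$ and that families 1 and 2 survive it.

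Next I would dispatch families 3 and 4 as nilpotent, hence outside $S_0$. For family 3, $u(0)=0\cdot p(0)=0$, so $u$ is nilpotent of index $1$; for family 4, $u(0)=(-a)(-1)=a\neq 0$ while $u^{(2)}(0)=u(a)=(a-a)p(a)=0$, so $u$ is nilpotent of index $2$. Both therefore lie in $N_0$ and are discarded. It then remains to confirm that families 1 and 2 are genuinely \emph{non}-nilpotent, which is the only point requiring a small computation and the place where one could, in principle, worry about an accidental overlap with $N_0$. For family 1 the linear iteration formula gives $u^{(n)}(0)=bn$, which is nonzero for every $n\in\mathbb{N}$ because $b\neq 0$; for family 2 (where $|a|\geq 2$) the same formula gives $u^{(n)}(0)=b\,\frac{a^{n}-1}{a-1}$, and since $a^{n}\neq 1$ for $|a|\geq 2$ and $b\neq 0$, this never vanishes. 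Hence families 1 and 2 meet $N_0$ emptily.

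Since Theorem 4.4 already guarantees that families 1 and 2 are locally nilpotent at $0$, combining the previous paragraph shows they lie in $L_{0,\emptyset}\setminus N_0=S_0$, while families 3 and 4 do not. This yields exactly the two lists in the statement. I do not expect a real obstacle: the corollary is essentially a repackaging of Theorem 4.4, whose proof already partitions $L_{0,\emptyset}$ by nilpotency, and the only delicate point---that the two linear families cannot secretly be nilpotent---is settled at once by the explicit iteration formulas above.
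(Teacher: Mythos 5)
Your proposal is correct and matches the paper's route: the paper states this corollary without a separate proof precisely because Theorem 4.4 already labels families (1)--(2) as locally nilpotent but not nilpotent and families (3)--(4) as nilpotent, so the corollary is just $S_0=L_{0,\emptyset}\setminus N_0$ applied to that list. Your explicit verification via the linear iteration formula that families (1) and (2) never hit $0$ (using $bn\neq 0$ and $b\,\tfrac{a^n-1}{a-1}\neq 0$ for $|a|\ge 2$) is exactly the computation embedded in the paper's proof of Theorem 4.4.
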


\section{Linear case $d=1$}

In this section we classify all polynomials that are locally nilpotent but non-nilpotent at $r$. Since the cases $r=0$ and $r=\pm 1$ have been covered in the previous section, here our focus would be on the values of $r$ that lie in $\mathbb{Z}\setminus\{0,\pm 1\}$. First, we state and prove a classification theorem for polynomials which are weakly locally nilpotent at 1 outside some given finite subset $A$ of $\mathcal{P}$. We will use this theorem to prove our final main result, Theorem 5.3.

\begin{theorem}
Let $A=\{q_1,\ldots, q_k\}$, where $q_1,\ldots, q_k$ are $k$ distinct primes. Then the following is the list of all the polynomials in $L_{1,A}^1$\textup{:}
\begin{enumerate}
    \item[(1)] $ x\pm q_1^{s_1}\cdots q_k^{s_k}$, where $s_i\in\mathbb{N}\cup\{0\}$.
    
    \item[(2)] $ \alpha(x-1)$, $\alpha\in\mathbb{Z}\setminus\{0\}$.
    
    \item[(3)] $ \pm q_1^{s_1}\cdots q_k^{s_k} x+1$, where $s_i\in\mathbb{N}\cup\{0\}$ such that $\sum s_i\ge 1$.
    
    \item[(4)] $ -2x-1$ (only when $2\in A$).
    
    \item[(5)] $ -2x+4$.
\end{enumerate}
\end{theorem}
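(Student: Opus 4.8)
The plan is to write a candidate polynomial as $u(x)=ax+b$ with $a\in\mathbb{Z}\setminus\{0\}$ and to decide membership in $L_{1,A}^1$ from the orbit $\{u^{(n)}(1)\}_{n\ge 1}$, which the \emph{linear iteration formula} makes completely explicit. The natural split is according to $|a|$, since the CRS Lemma (Lemma 3.2) only has force when $a\neq\pm 1$. I first dispose of the two degenerate multipliers and then treat $|a|\ge 2$, where the real content lies.

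For $a=1$ one has $u^{(n)}(1)=1+nb$, and for a prime $p$ the congruence $1+mb\equiv_p 0$ is solvable in $m\in\mathbb{N}$ exactly when $p\nmid b$. Hence $u\in L_{1,A}^1$ iff every prime dividing $b$ lies in $A$, i.e. $b=\pm q_1^{s_1}\cdots q_k^{s_k}$; this is family~(1). For $a=-1$ the orbit of $1$ is $\{b-1,1\}$, and since $1$ is divisible by no prime, local nilpotence forces $b-1\equiv_p 0$ for all $p\notin A$, which is impossible unless $b-1=0$; thus the only survivor is $-x+1=-(x-1)$, the $\alpha=-1$ instance of~(2).

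The heart of the argument is $|a|\ge 2$. Writing $\gamma=1-a-b$ and $\beta=-b$, the identity $(1-a)u^{(n)}(1)=\gamma a^n-\beta$ shows that for every prime $p\notin A\cup P(1-a)$ some term $\gamma a^n-\beta$ is divisible by $p$; hence $\mathcal{P}\setminus\cup_{n\in\mathbb{N}}P(\gamma a^n-\beta)$ is finite, and the argument of Remark~3.3 applies verbatim to give an integer $m$ with $b=-a^m(1-a-b)$. The degenerate possibilities $\beta=0$ and $\gamma=0$ correspond to $u=ax$ (orbit $a^n$) and to the fixed point $u(1)=1$, neither locally nilpotent, while $m=0$ would force $a=1$; so $m\neq 0$. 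Solving the relation yields $b=-a^m/(1+a+\cdots+a^{m-1})$ when $m\ge 1$ and $b=1/(1+a+\cdots+a^{|m|-1})$ when $m\le -1$; in either case, since the relevant geometric sum is coprime to $a$, integrality of $b$ forces $1+a+\cdots+a^{t-1}=\pm 1$. The elementary fact that, for $|a|\ge 2$, this holds only for $t=1$ or for $(t,a)=(2,-2)$ then cuts the possibilities down to four shapes: $m\ge 1$ gives $a(x-1)$ and $-2x+4$, and $m\le -1$ gives $ax+1$ and $-2x-1$.

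Finally I read off the $A$-constraints and verify sufficiency shape by shape. The nilpotent shapes $a(x-1)$ (index $1$) and $-2x+4$ (index $2$) lie in $L_{1,A}^1$ for every $A$, supplying the remaining instances of~(2) and all of~(5). For the two non-nilpotent shapes $u^{(n)}(1)$ equals, up to sign, a truncated geometric series in $a$: any prime $p\mid a$ is a permanent obstruction (the series is $\equiv_p 1$), whereas for $p\nmid a$ a suitable iterate vanishes modulo $p$ by choosing the exponent via Fermat's little theorem. This shows $ax+1\in L_{1,A}^1$ precisely when $P(a)\subseteq A$, i.e. $a=\pm q_1^{s_1}\cdots q_k^{s_k}$ with $\sum s_i\ge 1$ (family~(3)), and $-2x-1\in L_{1,A}^1$ precisely when $2\in A$ (family~(4)). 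I expect the main obstacle to be carrying out this sufficiency direction cleanly—especially handling the boundary primes dividing $a-1$ (and $p=3$ for $-2x-1$) uniformly—while the integrality step $1+a+\cdots+a^{t-1}=\pm 1$ is the decisive structural input that collapses infinitely many a priori candidates to the explicit list.
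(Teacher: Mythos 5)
Your proof is correct, and it rests on the same pillars as the paper's: the linear iteration formula, the CRS Lemma invoked through the argument of Remark 3.3 (extended from $L_{1,\emptyset}^1$ to $L_{1,A}^1$ exactly as you do, by absorbing $A\cup P(1-a)$ into the finite exceptional set), coprimality of $a^m$ with geometric sums, and Fermat's little theorem for sufficiency; the cases $a=\pm 1$ are handled identically in both. The genuine difference is how the $|a|\ge 2$ analysis is decomposed, and yours is leaner. The paper first splits on whether $u(1)-1$ is a unit (mirroring Theorem 4.1) and, in its main case, relaxes the CRS conclusion to $b=\pm a^m(1-a-b)$ with \emph{both} signs; the spurious plus sign then generates the candidates $(1\pm 2q_1^{s_1}\cdots q_k^{s_k})x\mp q_1^{s_1}\cdots q_k^{s_k}$, $-2x-3$, $-3x-2$, $-2x+6$, $-3x+6$, each of which must be eliminated by a separate application of Lemma 3.2, while $\alpha(x-1)$ and $-2x+4$ only emerge in the paper's separate Cases 2 and 3. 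You instead keep the sign the lemma actually delivers (and which Remark 3.3 itself records): $\beta/\gamma=a^m$ with $\beta=-b$, $\gamma=1-a-b$, hence only $b=-a^m(1-a-b)$. Then $m=0$ is instantly incompatible with $|a|\ge 2$, and the integrality step $1+a+\cdots+a^{t-1}=\pm 1$ (true for $|a|\ge 2$ only when $t=1$ or $(t,a)=(2,-2)$) collapses everything to the four shapes $a(x-1)$, $-2x+4$, $ax+1$, $-2x-1$ in a single pass: one use of the CRS Lemma, no case split on $u(1)$, and no dead-end candidates. What the paper's longer route buys is only redundant confirmation that the spurious candidates fail; what yours buys is brevity and a clearer explanation of why exactly these five families survive. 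Your sufficiency sketch (permanent obstruction at $p\mid a$, Fermat when $p\nmid a(a-1)$, the count $u^{(n)}(1)\equiv_p n+1$ when $p\mid a-1$, and $p\in\{2,3\}$ treated separately for $-2x-1$) matches the paper's verifications and is routine to complete.
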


\begin{proof}
Let $u=u(x)\in L_{1,A}^1$. Then by Fact 1.1 it must be linear, say $ax+b$. It is clear that $b\neq 0$ as otherwise $u^{(n)}(1)$ would just be $a^n$ and it cannot be divisible by any prime $p\in \mathcal{P}_{P(a)}$. By the \textit{linear iteration formula}, $u^{(n)}(1)=a^n+b(1+\cdots+a^{n-1})$, for every $n\in\mathbb{N}$. 
\begin{align*}
    \text{Note that if } a=1, ~u^{(m_p)}(1)& = 1+bm_p\equiv_p 0, \text{ for every prime }p\notin A,\\
    \implies & bm_p\equiv_p -1, \text{ for every prime }p\notin A,\\
    \implies & b\text{ is invertible in }\mathbb{F}_p, \text{ for every prime }p\notin A,\\
    \implies & b=\pm q_1^{s_1}\cdots q_k^{s_k}, \text{ for some }s_i's \textup{ in }\mathbb{N}\cup \{0\}.\hspace{57pt}
\end{align*}
One can check that the polynomials $x\pm q_1^{s_1}\cdots q_k^{s_k}$ are indeed in $L_{1,A}^1$, for $s_i\in\mathbb{N}\cup\{0\}$.

 If $a=-1$, $u(x)=-x+b$ and $u^{(2)}(x)=x$. So $u$ cannot be in $L_{1,A}^1$  unless $b=1$, and in that case, it is in fact in $N_{1,1}^1$. Thus we can assume that $|a|\ge 2$. Similar to the proof of Theorem 4.1, we can break down these polynomials into the following three cases:

\subsubsection*{Case 1. $u(1)-1\notin \{\pm 1\}$.}
This means that $P(u(1)-1)\neq \emptyset$. So $a+b=1 \pm q_1^{s_1}\cdots q_k^{s_k}$, i.e., $b=1-a \pm q_1^{s_1}\cdots q_k^{s_k}$, for some $s_i\in\mathbb{N}\cup\{0\}$ with $\sum s_i\ge 1$. Then by the \textit{linear iteration formula}, we have $$u^{(n)}(1)=\frac{b\pm a^n(1-a-b)}{1-a}$$ and it follows from Remark 3.3 that $\text{there exists an }m\in\mathbb{Z}$ such that $b=\pm a^m(1-a-b)$. If $m=0$, then $b=\pm(1-a-b)$, i.e., $a+2b=1$ or $a=1$. Since $|a|\ge 2$, we deduce that $a+2b=1$.We also have $a+b=1\pm q_1^{s_1}\cdots q_k^{s_k}$. Solving $a$ and $b$ from these two equations we get $a=1\pm 2q_1^{s_1}\cdots q_k^{s_k},~b=\mp q_1^{s_1}\cdots q_k^{s_k}$. So $u(x)=(1\pm 2q_1^{s_1}\cdots q_k^{s_k})x\mp q_1^{s_1}\cdots q_k^{s_k}$ and so $u^{(n)}(1)=\frac{1+(1\pm 2q_1^{s_1}\cdots q_k^{s_k})^n}{2},~n\in\mathbb{N}$. Letting $\alpha=1\pm 2q_1^{s_1}\cdots q_k^{s_k},\beta=-1$ and $\gamma =1$, it is clear that neither $\frac{\beta}{\gamma}$ nor $\frac{\gamma}{\beta}$ is a power of $\alpha$. So it follows from the Lemma 3.2 that $(1\pm 2q_1^{s_1}\cdots q_k^{s_k})x\mp q_1^{s_1}\cdots q_k^{s_k}\notin L_{1,A}^1$. Thus $m$ must be a non-zero integer. We consider the following four possibilities:

 \underline{If $m\in\mathbb{N}$ and $b=a^m(1-a-b)$}, then $b(1+a^m)=a^m(1-a)$. Since $\gcd(a^m,a^m+1)=1$, we must have $a^m+1|1-a$ which is only possible if $m=1$.

\underline{If $m\in\mathbb{N}$ and $b=-a^m(1-a-b)$}, then $b(1-a^m)=-a^m(1-a)$, i.e., $b(1+\cdots+a^{m-1})=-a^m$. Since $\gcd(1+\cdots+a^{m-1},a^m)=1$, we must have $1+\cdots+a^{m-1}=\pm 1$ which is only possible if $m\in\{1,2\}$.

\underline{If $m=-n,\text{ with }n\in\mathbb{N}$ and $b=a^m(1-a-b)$}, then $ba^n=1-a-b$, i.e., $b(a^n+1)=1-a$. It follows from above that this is only possible if $n=1$.

\underline{If $m=-n,\text{ with }n\in\mathbb{N}$ and $b=-a^m(1-a-b)$}, then $ba^n=-(1-a-b)$, i.e., $b(a^n-1)=a-1$. Again using the same logic as above, we conclude that $n\in\{1,2\}$.

Thus we only need to look at $m=\pm 1,\pm 2$, which we investigate in the following four subcases:

\noindent\textbf{Subcase 1.} $m=-1$.\\
Here we have $ba=\pm(1-a-b)$. First suppose that $ba=1-a-b$, i.e., $b(a+1)=1-a$. This means that $a+1|a-1$ and this is only possible if $a=-2$ and $a=-3$. These values generate the polynomials $u(x)=-2x-3$ and $u(x)=-3x-2$, respectively. When $u(x)=-2x-3$, the \textit{linear iteration formula} gives $$u^{(n)}(1)=2(-2)^n-1.$$ Letting $\alpha=-2,\beta=1$ and $\gamma=2$, it is clear that neither $\frac{\beta}{\gamma}$ nor $\frac{\gamma}{\beta}$ is a power of $\alpha$. So, by the Lemma 3.2, $-2x-3\notin L_{1,A}^1$. Similarly we can show that $-3x-2\notin L_{1,A}^1.$

 Now suppose $ba=-(1-a-b)$. This gives $b=1$ and hence $a=\pm q_1^{s_1}\cdots q_k^{s_k}$. Thus $u(x)=\pm q_1^{s_1}\cdots q_k^{s_k}x+1$ and it follows from the \textit{linear iteration formula} that $$u^{(n)}(1)=(\pm q_1^{s_1}\cdots q_k^{s_k})^n+[1+\cdots+(\pm q_1^{s_1}\cdots q_k^{s_k})^{n-1}]=\frac{1-(\pm q_1^{s_1}\cdots q_k^{s_k})^{n+1}}{1-(\pm q_1^{s_1}\cdots q_k^{s_k})}, ~n\in\mathbb{N}.$$ If $p\in P_A(1-(\pm q_1^{s_1}\cdots q_k^{s_k}))$, then $u^{(p)}(1)\equiv_p p\equiv_p 0$. So suppose that $p\notin P_A(1-(\pm q_1^{s_1}\cdots q_k^{s_k}))$. Now if $2\in A$, then the existence of $m_2$ is not a concern and if $2\notin A$, then $2\in P_A(1-(\pm q_1^{s_1}\cdots q_k^{s_k}))$ which was covered above. Finally, if $p$ is in ${A\cup \{2\}}$, then $u^{(p-2)}(1)\equiv_p 0$, by \textit{Fermat's little theorem}. So $u(x)=\pm q_1^{s_1}\cdots q_k^{s_k}x+1$ is in $L_{1,A}^1$.

\noindent\textbf{Subcase 2.} $m=1$.\\
Here we have $b=\pm a(1-a-b)$. First suppose that $b=a(1-a-b)$, i.e., $b(a+1)=a(1-a)$. The same reasoning as above implies $a+1|a-1$ so that the only possibilities we get are $a=-2,~b=6$ or $a=-3,~b=6$. These values produce the polynomials $u(x)=-2x+6$ and $u(x)=-3x+6$, respectively. When $u(x)=-2x+6$, the \textit{linear iteration formula} gives $$u^{(n)}(1)=-(-2)^n+2.$$ Letting $\alpha=-2,\beta=-2$ and $\gamma=-1$, it is clear that neither $\frac{\beta}{\gamma}$ nor $\frac{\gamma}{\beta}$ is a power of $\alpha$. So, by the Lemma 3.2, $-2x+6\notin L_{1,A}^1$. Similarly, we can show that $-3x+6\notin L_{1,A}^1$.

\noindent\textbf{Subcase 3.} $m=-2$.\\
Here we have $ba^2=\pm (1-a-b)$. First suppose $ba^2=1-a-b$, i.e., $b(a^2+1)=1-a$. This means that $a^2+1|a-1$ which is not possible as $|1-a|\le 1+|a|<1+a^2$. Thus $ba^2=-(1-a-b)$, i.e., $b(a+1)=1$, i.e., $b=a+1=\pm 1$. So $u(x)=-2x-1$. It follows from the \textit{linear iteration formula} that $$u^{(n)}(1)=(-2)^n-[1+\cdots+(-2)^{n-1}]=\frac{(-2)^{n+2}-1}{3},~n\in\mathbb{N}.$$ It is easy to see that $m_2$ does not exist, $m_3=1$ and for all $p\in \mathcal{P}_{\{2,3\}}$, $u^{(p-3)}(1)\equiv_p 0$. So $-2x-1$ is in $L_{1,A}^1$ iff $2\in A$.

\noindent\textbf{Subcase 4.} $m=2$.\\
Here we have $b=\pm a^2(1-a-b)$. First suppose that $b=a^2(1-a-b)$, i.e., $b(1+a^2)=a^2(1-a)$. Since $\gcd(1+a^2,a^2)=1$, we must have $1+a^2|1-a$, which is impossible (see the subcase 3 above). So $b=-a^2(1-a-b)$, i.e., $b(a+1)=-a^2$ which means $a+1=\pm 1$ and $b=\mp a^2$. Since $|a|\ge 2$, this means $a=-2$ and $b=4$. But then $u(1)-1=1\in \{\pm 1\},$ an impossibility in this \textit{case}!

This wraps us case 1. Now we look at the remaining two cases.

\subsubsection*{Case 2. $u(1)-1=-1$, i.e., $u(1)=0$.}
These are the polynomials in $N_{1,1}^1$.

\subsubsection*{Case 3. $u(1)-1=1$, i.e., $u(1)=2$.}
If $u(2)=0$, then $u(x)=-2x+4$. So we can suppose that $u(2)\notin\{0,1,2\}$, i.e., $u(2)$ is either $\le -1$ or $\ge 3$, i.e., $|u(2)-1|\ge 2,$ i.e., $P(u(2)-1)\neq \emptyset$. If $u(2)=3$, then $u(x)=x+1\in S_1$. So we can further assume that $u(2)\neq 3$. Since $u(1)=2,~b=2-a$ and so $u(x)=ax+(2-a)$. Then by the \textit{linear iteration formula}, we get
$$u^{(n)}(1)=\frac{2-a-a^n}{1-a},~~n\in\mathbb{N}.$$
Since $u\in L_{1,A}^1$ it follows from Remark 3.3 that $2-a=a^m$, for some $m\in\mathbb{Z}$.
Note that $m$ cannot be a non-positive integer as otherwise it would follow that $a$ must be equal to $\pm 1$. Thus $m\in\mathbb{N}$ and $2=a(1+ a^{m-1})$. Therefore $a=\pm 2$ and $1+a^{m-1}=\pm 1,$ i.e., $a^{m-1}=-2$, i.e., $a=-2$. But $a=-2$ implies that $b=4$ and hence $u(2)=2a+b=0$, which cannot happen as we have already said that $u(2)\not\in\{0,1,2,3\}$. Thus $ax+(2-a)\notin L_{1,A}^1$. \endproof{}
\end{proof}

 The next corollary follows directly from the computations in the proof of Theorem 5.1:

\begin{corollary}
Let $A=\{q_1,\ldots,q_k\}$ and $q_1,\ldots,q_k$ be $k$ distinct primes. Then the following is the list of all polynomials in $L_{1,A}^1\setminus N_1$\textup{:}
\begin{enumerate}
    \item[(1)] $ x+ q_1^{s_1}\cdots q_k^{s_k}$, where $s_i\in\mathbb{N}\cup\{0\}$.
    
    \item[(2)] $ x- q_1^{s_1}\cdots q_k^{s_k}$, where $s_i\in\mathbb{N}\cup\{0\}$ such that $\sum s_i\ge 1$.
    
    \item[(3)] $ \pm q_1^{s_1}\cdots q_k^{s_k} x+1$, where $s_i\in\mathbb{N}\cup\{0\}$ such that $\sum s_i\ge 1$.
    
    \item[(4)] $ -2x-1$ (only when $2\in A$).
\end{enumerate}
\end{corollary}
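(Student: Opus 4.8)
The plan is to take the five families of polynomials listed in Theorem 5.1, which together exhaust $L_{1,A}^1$, and sort each one according to whether it lies in $N_1$. Since every such $u$ is linear, say $u(x)=ax+b$, I can decide nilpotency at $1$ purely from the \emph{linear iteration formula} $u^{(n)}(1)=a^n+b\sum_{i=0}^{n-1}a^i$: the polynomial is nilpotent at $1$ exactly when this expression vanishes for some $n\in\mathbb{N}$. So the whole task reduces to a vanishing test on a single closed-form expression, family by family.

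First I would dispose of the two families that are entirely nilpotent. For family (2) of Theorem 5.1, $u(x)=\alpha(x-1)$ satisfies $u(1)=0$, so it lies in $N_{1,1}^1$; for family (5), $u(x)=-2x+4$ gives $u(1)=2$ and $u(2)=0$, placing it in $N_{1,2}^1$. Neither contributes to $L_{1,A}^1\setminus N_1$, which is why both disappear from the corollary's list.

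Next I would treat family (1), $u(x)=x\pm q_1^{s_1}\cdots q_k^{s_k}$. Writing $M=q_1^{s_1}\cdots q_k^{s_k}\ge 1$ and using $a=1$, the iteration formula collapses to $u^{(n)}(1)=1\pm nM$. For the plus sign this equals $1+nM>0$ for every $n$, so the whole subfamily survives, giving item (1). For the minus sign, $1-nM=0$ forces $nM=1$, hence $M=1$ and $n=1$; this is precisely the case where all $s_i=0$, i.e.\ the polynomial $x-1\in N_{1,1}^1$. Thus $x-M$ escapes $N_1$ exactly when $\sum s_i\ge 1$, which is item (2). This single bookkeeping point --- that the empty product $M=1$ recovers the nilpotent $x-1$ --- is the one place where the constraint $\sum s_i\ge 1$ must be inserted, and it is the main thing to get right, since everywhere else the dichotomy is clean (a family is either wholly inside or wholly outside $N_1$).

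Finally I would handle families (3) and (4), all of which have $|a|\ge 2$. With $b=1$ and $a=\pm M$ (resp.\ $a=-2$, $b=-1$), the iteration formula telescopes to a geometric sum already computed inside the proof of Theorem 5.1, namely $u^{(n)}(1)=\frac{a^{n+1}-1}{a-1}$ for family (3) and $u^{(n)}(1)=\frac{(-2)^{n+2}-1}{3}$ for $-2x-1$. Each vanishes only if $a^{n+1}=1$ (resp.\ $(-2)^{n+2}=1$), which is impossible once $|a|\ge 2$; hence these polynomials are never nilpotent at $1$ and survive unchanged as items (3) and (4). Collecting the survivors yields exactly the asserted list, so no genuinely new computation beyond those in the proof of Theorem 5.1 is required.
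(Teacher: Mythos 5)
Your proposal is correct and matches the paper's intent exactly: the paper derives this corollary "directly from the computations in the proof of Theorem 5.1," which is precisely your strategy of taking the five families of Theorem 5.1 and testing each for membership in $N_1$ via the linear iteration formula. Your identification of the single subtle point — that the empty product case $x-1$ of family (1) is nilpotent, forcing the constraint $\sum s_i\ge 1$ in item (2) — is the key bookkeeping step, and your vanishing computations ($1\pm nM$, $\frac{a^{n+1}-1}{a-1}$, $\frac{(-2)^{n+2}-1}{3}$) are all accurate.
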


 Finally we state and prove the last (main) result of this paper.

\begin{theorem}
Let $r$ be a natural number greater than or equal to 2 and $r=q_1^{a_1}\cdots q_k^{a_k}$ be the prime decomposition. Then the following is the list of all polynomials in $S_r$\textup{:}
\begin{enumerate}
    \item[(1)] $x+ q_1^{s_1}\cdots q_k^{s_k}$, where $s_i\in\mathbb{N}\cup\{0\}$.
    
    \item[(2)] $x-q_1^{s_1}\cdots q_k^{s_k}$, where $s_i\in\mathbb{N}\cup\{0\}$ with at least one $j\in\{1,\ldots, k\}$ s.t. $s_j>a_j$.
    
    \item[(3)] $\pm q_1^{s_1}\cdots q_k^{s_k}x+r$, where $s_i\in\mathbb{N}\cup\{0\}$ with $\sum\limits_{i} s_i\ge 1$.
    
    \item[(4)] $-2x-r$ (only when $r$ is even).
\end{enumerate}
\end{theorem}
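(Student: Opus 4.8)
The plan is to start from Fact 1.1, which forces any $u\in S_r$ to be linear, say $u(x)=ax+b$ with $a\in\mathbb{Z}\setminus\{0\}$; here $b\neq 0$, since otherwise $u^{(n)}(r)=a^n r$ is never divisible by primes outside $P(a)\cup P(r)$, so $u\notin L_r$. The \emph{linear iteration formula} then makes everything explicit, and I would split on the value of $a$. The case $a=-1$ is immediate: the orbit of $r$ is the two-cycle $b-r,\,r,\,b-r,\dots$, so local nilpotency at the infinitely many primes not dividing $r$ forces $b-r=0$, i.e.\ $b=r$, which is nilpotent; hence $a=-1$ contributes nothing to $S_r$.

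For $a=1$ I would use $u^{(n)}(r)=r+nb$ directly. A prime $p$ dividing $b$ can divide some $r+nb$ only if $p\mid r$, whereas for $p\nmid b$ the congruence $nb\equiv -r\pmod p$ is always solvable; thus $u$ is locally nilpotent at $r$ exactly when $P(b)\subseteq P(r)$, i.e.\ $|b|=q_1^{s_1}\cdots q_k^{s_k}$. Non-nilpotency is then read off from $r+nb=0\iff n=-r/b$: when $b>0$ this never happens, giving item (1), while when $b<0$ it happens precisely when $|b|\mid r$, so non-nilpotency is equivalent to $|b|\nmid r$, i.e.\ to having some $s_j>a_j$, giving item (2).

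The substantive case is $|a|\ge 2$, and here the tool is the reduction of polynomials. If $r\mid b$, write $v(x)=\tfrac1r u(rx)=ax+\tfrac br$; by the reduction principle $u\in S_r$ iff $v\in L_{1,P(r)}^1\setminus N_1$, and Corollary 5.2, applied with $A=P(r)=\{q_1,\dots,q_k\}$, lists all such $v$. Matching a degree-one $v=ax+c$ with $|a|\ge 2$ against that list forces either $c=1$, giving $b=r$ and the family in item (3), or $c=-1$ with $2\in P(r)$, giving $b=-r$ and the polynomial in item (4), which exists exactly when $r$ is even. So once reduction is available the answer drops straight out of Corollary 5.2.

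The main obstacle is therefore to justify that reduction applies at all, i.e.\ to show that $|a|\ge 2$ and $u\in S_r$ force $r\mid b$ (in fact $b=\pm r$). My plan is to feed the hypothesis into Remark 3.3, which provides $m\in\mathbb{N}\cup\{0\}$ with $a^m b=b+(a-1)r$; the value $m=0$ yields $(a-1)r=0$, impossible since $|a|\ge 2$ and $r\ge 2$, so $m\ge 1$ and $b\,(1+a+\cdots+a^{m-1})=r$. Independently, reducing modulo any prime $p\mid a$ gives $u^{(n)}(r)\equiv b\pmod p$ for all $n\ge 1$, so local nilpotency forces $P(a)\subseteq P(b)$, while $1+a+\cdots+a^{m-1}\equiv 1\pmod a$ shows that geometric sum is coprime to $a$. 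The delicate step — and the one I expect to demand the most care — is deducing from these relations that $1+a+\cdots+a^{m-1}=\pm1$, so that $m=1$ (whence $b=r$) or $a=-2,\,m=2$ (whence $b=-r$): a priori the sum only needs to \emph{divide} $r$, and pinning down that divisibility simultaneously with the constraint $P(a)\subseteq P(b)$ is the crux on which the whole classification of the $|a|\ge 2$ case rests.
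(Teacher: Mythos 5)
Your outline reproduces the paper's own strategy step for step (Fact 1.1 forces linearity; the linear iteration formula handles $a=\pm 1$; Remark 3.3 plus reduction and Corollary 5.2 handle $|a|\ge 2$), and everything you actually prove is correct. But the step you defer as ``the delicate step'' --- deducing $1+a+\cdots+a^{m-1}=\pm 1$, hence $b=\pm r$, from the relations $b\,(1+a+\cdots+a^{m-1})=r$, $P(a)\subseteq P(b)$ and $\gcd(1+a+\cdots+a^{m-1},a)=1$ --- is a genuine gap, and it cannot be closed, because the assertion is false. Take $r=6$ and $u(x)=2x+2$: here $a=2$, $b=2$, $m=2$, $b(1+a)=6=r$, $P(a)=\{2\}=P(b)$ and $\gcd(3,2)=1$, yet $1+a=3\neq\pm 1$ and $b\neq \pm r$. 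Nevertheless $u\in S_6$: the linear iteration formula gives $u^{(n)}(6)=8\cdot 2^n-2=2\,(2^{n+2}-1)$, which is never zero, is always even, and for every odd prime $p$ the congruence $2^{n+2}\equiv_p 1$ is solvable with $n\in\mathbb{N}$ because $2$ is a unit modulo $p$. So $2x+2$ is locally nilpotent but not nilpotent at $6$, while belonging to none of the families (1)--(4); the same mechanism yields $-2x+2$ and $-4x-2$ in $S_6$, and in general every $ax+b$ with $|a|\ge 2$, $b\,(1+a+\cdots+a^{m-1})=r$ for some $m\in\mathbb{N}$, and $P(a)\subseteq P(b)$ lies in $S_r$.

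You should know that this is not a defect of your route as compared with the paper's: the published proof founders at exactly the same point. In its Case 1 the author deduces from $r\mid b\,(\pm 1\pm q_1^{s_1-t_1}\cdots q_k^{s_k-t_k})$ that $r\mid b$, on the grounds that the second factor is coprime to $r$; that is guaranteed only when every exponent $s_i-t_i$ is positive. In the example above one has $s_1-t_1=2$ but $s_2-t_2=0$, the factor is $1-4=-3$, and $\gcd(6,-3)=3$: indeed $6\mid 2\cdot(-3)$ while $6\nmid 2$. So Theorem 5.3 as stated appears to be incorrect, and the honest endpoint of your set-up (and of the paper's) is the weaker, correct classification for $|a|\ge 2$: the polynomials $ax+b$ in $S_r$ are exactly those with $b\,(1+a+\cdots+a^{m-1})=r$ for some $m\in\mathbb{N}$ and $P(a)\subseteq P(b)$, of which item (3) is the case $m=1$ and item (4) is the case $(a,b,m)=(-2,-r,2)$, but larger $m$ genuinely occurs. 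Your instinct that this divisibility step is the crux on which the whole $|a|\ge 2$ case rests was exactly right; it is the point where both your proposal and the paper break, and no argument can repair it without changing the statement of the theorem.
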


\begin{proof}
Let $u=u(x)\in S_r$ and $A:=P(r)$. Then by Fact 1.1 $u$ must be linear, say $ax+b$. First we will look at the instances when $a=\pm 1$.
\begin{align*}
    \text{Note that if } a=1, ~u^{(m_p)}(1)=&1+bm_p\equiv_p 0, \text{ for every prime }p\notin A,\\
    \implies & bm_p\equiv_p -1, \text{ for every prime }p\notin A,\\
    \implies & b\text{ is invertible in }\mathbb{F}_p, \text{ for every prime }p\notin A,\\
    \implies & b=\pm q_1^{s_1}\cdots q_k^{s_k}, \text{ for some }s_i's \textup{ in }\mathbb{N}\cup \{0\}.\hspace{57pt}
\end{align*}
So, when $a=1$, we expect $u(x)$ to be of the form $x \pm q_1^{s_1}\cdots q_k^{s_k}$. First suppose that $u(x)=x+q_1^{s_1}\cdots q_k^{s_k}$. Then by the \textit{linear iteration formula}, $u^{(n)}(r)=q_1^{a_1}\cdots q_k^{a_k}+n\cdot q_1^{s_1}\cdots q_k^{s_k}$, which is always non-zero for every $n\in\mathbb{N}$. One can check that these polynomials are locally nilpotent and they are in (1) in the list above. Now suppose that $u(x)=x-q_1^{s_1}\cdots q_k^{s_k}$. Then by the \textit{linear iteration formula} $u^{(n)}(r)=q_1^{a_1}\cdots q_k^{a_k}-n\cdot q_1^{s_1}\cdots q_k^{s_k},~n\in\mathbb{N}$. If, for all $i\in\{1,\ldots,k\}$, $s_i\le a_i$, then $u^{(q_1^{a_1-s_1}\cdots q_k^{a_k-s_k})}(r)=0$, which is a contradiction as $u$ is non-nilpotent! That means we must have at least one $j\in\{1,\ldots,k\}$ such that $a_j<s_j$. One can now check that $u$ is non-nilpotent but locally nilpotent and so we get the polynomials in (2) in the list above.

 If $a=-1$, $u(x)=-x+b$ and $u^{(2)}(x)=x$. So $u$ cannot be in $L_{r,\emptyset}^1$  unless $b=r$, and in that case, it is in fact in $N_{r,1}^1$, a contradiction! So $a\neq -1$. Thus we suppose $|a|\ge 2$. It follows from Remark 3.3 that $\text{there exists an }~m\in\mathbb{N}\cup \{0\}$ such that $a^mb=b+ar-r$. If $m=0$ then $r(1-a)=0$ which is an impossibility as $r\neq 0$ and $|a|\ge 2$. That means that $m\in\mathbb{N}$ and $b(a^m-1)=r(a-1),$ i.e., $b(1+\cdots+a^{m-1})=r$ so that $b|r$. 

 We want to show that $u(r)-r\notin \{\pm 1\}$. Suppose otherwise, i.e., $u(r)=r\pm 1$. This means that $b=r-ar\pm 1$, i.e., $u(x)=ax+(r-ar\pm 1)$. We will only consider the possibility $b=r-ar-1$ as the other possibility can be rejected using the same argument. Applying the \textit{linear iteration formula}, we get $$u^{(n)}(r)=\frac{a^n+r-ar-1}{1-a}=\frac{a^n+b}{1-a},~\text{ for all }n\in \mathbb{N}.$$ From Remark 3.3 it follows that $r-ar-1=-a^t$, for some $t\in\mathbb{Z}$. It is clear that $t\neq 0$, as otherwise $r-ar=0,$ i.e., $r(1-a)=0,$ i.e., either $r=0$ or $a=1$, which is not true!
If $t=-n$ for some $n\in\mathbb{N}$, then $a^n(r-ar-1)=-1,$ again an impossibility as $|a|\ge 2$! Thus $t\in\mathbb{N}$ and $r(1-a)=1-a^t$, i.e., $r=1+\cdots+a^{t-1}$. So $t\ge 2$, $a|r-1$ and $u^{(t)}(r)=0$, i.e., $u$ is nilpotent at $r$, a contradiction! Thus $u(r)-r$ cannot be a unit and $u(r)=r\pm q_1^{s_1}\cdots q_k^{s_k}$, for a suitable collection of $s_i's$ in $\mathbb{N}\cup\{0\}$ with $\sum\limits_i s_i\ge 1$. So $b=r-ar\pm q_1^{s_1}\cdots q_k^{s_k}$. But then $b|r$ implies that $b|q_1^{s_1}\cdots q_k^{s_k}$, i.e., $\text{there exists}~t_i\in\mathbb{N}\cup \{0\}$, with $t_i\le s_i$ for every $i$, such that $b=\pm q_1^{t_1}\cdots q_k^{t_k}$. From $b=r-ar\pm q_1^{s_1}\cdots q_k^{s_k}$ we get $ra=r-b\pm q_1^{s_1}\cdots q_k^{s_k}=r-b(\pm 1\pm q_1^{s_1-t_1}\cdots q_k^{s_k-t_k})$, i.e., $r|b(\pm 1\pm q_1^{s_1-t_1}\cdots q_k^{s_k-t_k})$.\\
Suppose, if possible, all the $t_i's$ are zero. Then $b=\pm 1$ and so $r$ must divide $\pm 1\pm q_1^{s_1}\cdots q_k^{s_k}$, which is clearly absurd as $\gcd(r,\pm 1\pm q_1^{s_1}\cdots q_k^{s_k})=\gcd(q_1^{a_1}\cdots q_k^{a_k},\pm 1\pm q_1^{s_1}\cdots q_k^{s_k})=1$. Thus $\sum\limits_i t_i\ge 1$. All of these now boil down to the following two cases:

\subsubsection*{Case 1. $There~ exists~j\in\{1,\ldots,k\}~ s.t.~s_j>t_j$.}
Since $\gcd(r,\pm 1\pm q_1^{s_1-t_1}\cdots q_k^{s_k-t_k})=1$, $r$ must divide $b$ so that we can deduce $r=\pm b$. So $a_i=t_i\le s_i,~\text{ for all}~i\in\{1,\ldots,k\}$. We use the technique of reduction of polynomials here. Define $v=v(x):=\frac{1}{r}u(rx)=ax\pm 1.$ Then $$v(1)=\frac{1}{r}u(r)=\frac{1}{r}(r\pm q_1^{s_1}\cdots q_k^{s_k})=1\pm q_1^{s_1-a_1}\cdots q_k^{s_k-a_k}$$ and $v\in L_{1,A}^1\setminus N_1$. It follows now from the list in Corollary 5.2 that we have two possibilities for $v$ (since $|a|\ge 2$):
\begin{enumerate}
    
    \item[(i)] $v(x)=\pm q_1^{s_1-a_1}\cdots q_k^{s_k-a_k}x+1$, in which case $u(x)=\pm q_1^{s_1-a_1}\cdots q_k^{s_k-a_k}x+r$, or
    
    \item[(ii)] $v(x)=-2x-1$ (only when $2\in A$), in which case $u(x)=-2x-r$.
\end{enumerate}
One can check that both (i) and (ii) are indeed in $S_r$.

\subsubsection*{Case 2. $For~each~i\in\{1,\ldots, k\},~s_i=t_i$.}
Then $\pm q_1^{s_1}\cdots q_k^{s_k}=b|r$, i.e., $a_i\ge s_i$ for each $i$. 
 From $b=r-ar\pm q_1^{s_1}\cdots q_k^{s_k}$ we get $r(1-a)=\pm 2q_1^{s_1}\cdots q_k^{s_k}=\pm 2b$. Thus either $r=\pm b$ or $r=\pm 2b$. The first possibility has been taken care of in \textit{Case }1. So we can assume that $r=2q_1^{s_1}\cdots q_k^{s_k}=\pm 2b$. That would mean $a=2$ and that $2\in A$. Without loss of generality, let $q_1=2$ so that $r=2^{s_1+1}\cdots q_k^{s_k}$. Rewriting $b=r-ar\pm q_1^{s_1}\cdots q_k^{s_k}$ gives us $2r=r-2b$, i.e., $r=-2b$. This means that $u(x)=2x-\frac{r}{2}$. It follows from the \textit{linear iteration formula} that $$u^{(n)}(r)=\frac{r}{2}\cdot (2^n+1),~n\in\mathbb{N}.$$ Letting $\alpha=2,\beta =-1, \gamma=1$, we can see that neither $\frac{\beta}{\gamma}$ nor $\frac{\gamma}{\beta}$ is a power $\alpha$. Thus from the Lemma 3.2, $\mathcal{P}\setminus P(2^n+1)$ is an infinite set so that $\mathcal{P}\setminus P(\frac{r}{2}\cdot (2^n+1))$ is also an infinite set. So $2x-\frac{r}{2}\not\in S_r$. This completes the proof.\endproof{}
\end{proof}

 It follows directly from Fact 3.1 and Theorem 5.3 that:

\begin{corollary}
If $r=-q_1^{a_1}\cdots q_k^{a_k}$ is the prime decomposition of an integer $r\le -2$, then the following is the list of all polynomials in $S_r$\textup{:}
\begin{enumerate}
    \item[(1)] $x- q_1^{s_1}\cdots q_k^{s_k}$, where $s_i\in\mathbb{N}\cup\{0\}$.
    
    \item[(2)] $x+q_1^{s_1}\cdots q_k^{s_k}$, where $s_i\in\mathbb{N}\cup\{0\}$ with at least one $j\in\{1,\ldots, k\}$ s.t. $s_j>a_j$.
    
    \item[(3)] $\pm q_1^{s_1}\cdots q_k^{s_k}x-r$, where $s_i\in\mathbb{N}\cup\{0\}$ with $\sum\limits_{i} s_i\ge 1$.
    
    \item[(4)] $-2x+r$ (only when $r$ is even).
\end{enumerate}
\end{corollary}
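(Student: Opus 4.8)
The plan is to deduce this corollary directly from Theorem 5.3 by transporting everything through the sign symmetry recorded in Fact 3.1, so no new dynamics need to be analyzed. Since $r\le -2$, the positive integer $-r=q_1^{a_1}\cdots q_k^{a_k}$ is at least $2$, and therefore Theorem 5.3 already supplies the complete, non-redundant list of $S_{-r}$. What remains is purely to push that list across the bijection $S_{-r}\to S_r$ and to re-express the resulting polynomials in terms of $r$.

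Concretely, the assignment $\Phi(u)(x):=-u(-x)$ is an involution on $\mathbb{Z}[x]$, and by Fact 3.1 it restricts to mutually inverse bijections $S_r\leftrightarrow S_{-r}$ (and, simultaneously, $N_{r,n}\leftrightarrow N_{-r,n}$). By Fact 1.1 every member of $S_{-r}$ is linear, and on a linear polynomial $\Phi$ acts by $cx+d\mapsto cx-d$: it fixes the leading coefficient $c$ (so each choice of sign in $\pm q_1^{s_1}\cdots q_k^{s_k}$ is carried to the same choice) and negates the constant term. First I would apply $\Phi$ to each of the four families in Theorem 5.3 describing $S_{-r}$, then rewrite the negated constant through $-r=|r|$. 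Because $\Phi$ is a bijection, the four image families are exactly $S_r$, with no overlaps or omissions, landing on the families listed here with matching leading-coefficient shape and appropriately signed constant term.

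The substance of the argument is bookkeeping rather than new mathematics. I would check that each defining side-condition transfers unchanged: the exponents $a_i$ are read off the common factorization of $|r|=-r$, so they are literally the same integers in both lists; the constraint $\sum_i s_i\ge 1$ is untouched by $\Phi$; and the parity clause ``only when $r$ is even'' survives because $-r$ is even exactly when $r$ is. The clause isolating the non-nilpotent members of the $a=1$ family, namely the requirement that some $s_j>a_j$, transfers for free as well, since $\Phi$ carries $N_{-r}$ onto $N_r$, so an image polynomial lies in $S_r$ if and only if its preimage lies in $S_{-r}$.

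I expect the only real obstacle to be the sign bookkeeping in families (3) and (4), where the constant term is itself a $\pm$-valued multiple of $|r|$: under $\Phi$ this constant flips sign and must then be rewritten via $-r=|r|$, and one must take care not to conflate that flip with the leading-coefficient sign $\pm q_1^{s_1}\cdots q_k^{s_k}$, lest a family be double-counted or mislabelled. Once the signs are tracked correctly, each family of Theorem 5.3 corresponds to exactly one family here, and the corollary follows. No further analytic input, such as the CRS Lemma (Lemma 3.2) or the reduction of polynomials, is required, since all of that machinery has already been absorbed into Theorem 5.3.
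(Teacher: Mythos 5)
Your strategy is precisely the paper's own: the paper derives Corollary 5.4 in one line from Fact 3.1 and Theorem 5.3, by pushing the list for $S_{-r}$ through the involution $u(x)\mapsto -u(-x)$. Families (1) and (2), together with all the side conditions, do transfer exactly as you describe.

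The problem is the step you explicitly flag as ``the only real obstacle'' and then never perform. On a linear polynomial the involution fixes the leading coefficient and negates the constant term. Theorem 5.3, applied to the positive integer $-r=q_1^{a_1}\cdots q_k^{a_k}$, says that family (3) of $S_{-r}$ consists of $\pm q_1^{s_1}\cdots q_k^{s_k}x+(-r)$; negating the constant gives $\pm q_1^{s_1}\cdots q_k^{s_k}x+r$, \emph{not} the printed $\pm q_1^{s_1}\cdots q_k^{s_k}x-r$. Likewise family (4) of $S_{-r}$ is $-2x-(-r)$, whose image is $-2x-r$, not the printed $-2x+r$. These are genuinely different sets of polynomials, not reparametrizations of the same set. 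A concrete check: for $r=-2$, the printed family (3) contains $2x-r=2x+2$, which has $-2$ as a fixed point, so its orbit at $-2$ is constantly $-2$ and in particular is never $0$ modulo $3$; hence $2x+2\notin S_{-2}$. What Fact 3.1 actually produces is $2x-2$ (the image of $2x+2\in S_2$, which lies in family (3) of Theorem 5.3), and indeed $2x-2\in S_{-2}$. So the corollary as printed carries a sign error in items (3) and (4) --- the correctly transported families are $\pm q_1^{s_1}\cdots q_k^{s_k}x+r$ and $-2x-r$ --- and your proposal, by asserting without computation that the images ``land on the families listed here,'' endorses a statement that is false as written. The bookkeeping you deferred is not a formality: it is the entire content of the proof, and carrying it out would have forced you either to correct the statement or to notice the discrepancy.
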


\section{Some open problems}
For $u(x)$, a non-constant polynomial over $\mathbb{Z}$, let
$$N(u):=\{r\in\mathbb{Z}~|~u\in N_r \} ,~~LN(u):=\{r\in\mathbb{Z}~|~u\in L_{r,\emptyset} \}.$$ Then one can look at the following questions:

\begin{enumerate}
     \item[Q1.] Describe all $u's$ such that $N(u)$ is finite.
    
    \item[Q2.] Describe all $u's$ such that $LN(u)$ is finite.
    
    \item[Q3.] Given $r\in\mathbb{Z}$, describe all $u's$ such that $r\in LN(u)$.
\end{enumerate}

{\bf Few words about the open problems.} The methods used in the paper are actually useful for linear polynomials and due to Fact 1.1 we were able to achieve our goal by studying the linear polynomials only. It is clear that if $u$ is assumed to be linear, then $N(u)$ is finite for every $u$ except for the polynomials of the form $\pm x\pm c$, $c\in\mathbb{Z}\setminus\{0\}$. This answers Q1 for linear polynomials $u(x)$. It should be noted that $LN(u)=N(u)\cup\{r\in\mathbb{Z}~|~u\in S_r\}$. So for Q2, due to Fact 1.1 again, if the degree of $u$ is greater than or equal to 2 then $LN(u)$ is finite so that $N(u)$ is finite. For linear polynomials of the form $\pm x\pm c$, $c\in\mathbb{Z}\setminus\{0\}$, $N(u)$ is infinite and so $LN(u)$ is infinite. If $u=ax+b$ with $a\neq 1$, then we know that $u^{(n)}(r)=\frac{a^n(r-a-b)+b}{1-a}$. Note that $\frac{b}{a+b-r}=a^k$, for some $k\in\mathbb{Z}$ can only be true for finitely many values of $r$. So it follows from Lemma 3.2 that $\mathcal{P}\setminus\cup_{n\in\mathbb{N}} P((r-a-b)a^n+b)$ is finite for only finitely many values of $r$. Thus for these polynomials, $LN(u)$ is indeed finite. In this paper we have fully classified $S_r$, for every integer $r$. Also given an $r$, using the \textit{linear iteration formula} it should be easy enough to describe the linear $u's$ such that $r\in N(u)$. So we have a partial answer for Q3 here. Thus if one knows enough about the form of the nilpotent polynomials of degree higher than or equal to 2, one should potentially be able to answer all of the questions above. This is a work in progress and it is of course important enough to be considered as a separate work.

\noindent {\bf Acknowledgements.} The author gratefully acknowledges his advisors Prof. Alexander Borisov and Prof. Adrian Vasiu for their constant support, encouragement and very helpful suggestions. The author would also like to thank Prof. Jeremy Rouse for suggesting \cite{CS97} which was one of the main tools used to prove Theorems 5.1 and 5.3 and Prof. Kiran Kedlaya for maintaining a wonderful archive of William Lowell Putnam Mathematics Competition questions and answers here \url{https://kskedlaya.org/putnam-archive/}, which was very helpful in the proof of Theorem 4.4. Finally, the author would like to thank Prof. Thomas Tucker for his suggestion to use Theorem 5 of \cite{TS13}, which led to Fact 1.1 and that was very useful to boil down the possible candidates for locally nilpotent polynomials.

\end{document}